\newtheorem{theorem}{Theorem}
\newtheorem{lemma}[theorem]{Lemma}
\newtheorem{definition}[theorem]{Definition}
\newtheorem{corollary}[theorem]{Corollary}
\newtheorem{assumption}[theorem]{Assumption}
\newcommand{\im}{\mathbf{i}}
\newcommand{\tr}{\mathrm{T}}
\title{A characterization of real matrix semigroups}
\author{Benedict Bauer\thanks{Financial support from the Austrian Science Fund (FWF) under grants P~30750
and Y~1235 is gratefully acknowledged.}\\
University of Vienna \\
\tt{benedict.bauer@univie.ac.at}\\
\\
 Stefan Gerhold \\
TU Wien \\
\tt{sgerhold@fam.tuwien.ac.at}
}
\date{\today}
\numberwithin{equation}{section}
\numberwithin{theorem}{section}
\begin{document}

\maketitle

\begin{abstract}
  We characterize all real matrix semigroups satisfying a mild boundedness
  assumption, without assuming continuity. Besides the continuous solutions
  of the semigroup functional equation,
  we give a description of solutions arising from non-measurable solutions
  of Cauchy's functional equation.
  To do so, we discuss the primary decomposition and the
  Jordan--Chevalley decomposition of a matrix semigroup.
  Our motivation stems from a characterization of all
  multi-dimensional self-similar Gaussian Markov processes,
  which is given in a companion paper.
\end{abstract}

MSC 2020: 
   39B22, 
   47D03,  
   15A16  
\smallskip

Keywords: Matrix semigroup, matrix exponential, primary decomposition, Jordan--Chevalley decomposition, Cauchy's functional equation

\section{Introduction}

It is a classical fact that all continuous matrix-valued functions $g:[0,\infty)\to\mathbb{R}^{n\times n}$ satisfying the semigroup property
\begin{align}\label{eq:sg def}
  g(x+y)&=g(x)g(y),\quad x,y\geq 0, \\
  g(0)&=1,
\end{align}
are given by the maps $g(x)=\exp(Mx)$, where $M\in\mathbb{R}^{n\times n}$
is arbitrary. See, e.g., Problem~2.1 in~\cite{EnNa00} and the subsequent
discussion. To the best of our knowledge, few authors have considered
non-continuous solutions of~\eqref{eq:sg def} in the multidimensional
case $d>1$. Kuczma and Zajtz~\cite{KuZa66} determine all matrix semigroups $(g(x))_{x\in\mathbb R}$ where $x\mapsto g(x)$
is measurable. Zajtz~\cite{Za71} characterizes the matrix semigroups $(g(x))_{x\in\mathbb Q}$ indexed
by rational numbers. The one-dimensional case, which is equivalent to Cauchy's functional
equation,
\begin{equation}\label{eq:cauchy}
    f(x)+f(y)=f(x+y), \quad f:\mathbb{R}\to \mathbb{R},
\end{equation}
 is well studied, on the other hand (see~\cite{Ac66,BiGoTe87}).
Our motivation to investigate non-continuous matrix semigroups
stems from probability theory: In a companion paper, we develop a
 characterization of all self-similar Gaussian Markov processes.
By self-similarity, the bivariate covariance function of such a multi-dimensional stochastic process can
be transformed to a matrix function of a single argument, which must satisfy~\eqref{eq:sg def}.
See Section~\ref{se:prelim} for some more details.
Our main result (Theorem~\ref{thm:main}) determines all solutions of~\eqref{eq:sg def} which satisfy a mild boundedness assumption.
In Section~\ref{se:pd}, we provide a decomposition of the vector space
into invariant subspaces, and establish some useful properties
of the decomposition. The main proofs (in Section~\ref{se:proof})
are preceded by a discussion of the Jordan--Chevalley decomposition
of a semigroup, in Section~\ref{se:jc}.
In Appendix~\ref{se:cauchy}, we give two auxiliary results on Cauchy's functional equation.

\section{Preliminaries and main result}\label{se:prelim}

To motivate our investigation, we first recall some facts about Gaussian Markov processes (see, e.g., \cite{Li12,No12} for more information).
Let $X=(X_t)_{t\geq 0}$ be a $d$-dimensional real centered Gaussian process.
The covariance of~$X$ is a matrix-valued function $(s,t)\mapsto R(s,t)\in
\mathbb{R}^{d\times d}$ satisfying
\[
   v^\tr R(s,t) u = \mathbb{E}[X_s^\tr v\, X_t^\tr u],\quad s,t\geq0,\ u,v\in\mathbb{R}^d,
\]
and uniquely characterizes the law of the process.
Suppose that~$X$ is self-similar, and that $R(1,1)=\mathrm{id}$ is the
identity matrix. 
In terms of the covariance function, self-similarity means that
$R(as,at)=a^{2H}R(s,t)$ for $a,s,t>0$ and some self-similarity parameter $H>0$.
By a classical criterion~\cite[Theorem~V.8.1]{Do53}, $X$ is a Markov process if and only if
its covariance function satisfies
\begin{align}\label{funcequation}
    R(s,t)R(t,t)^{-1}R(t,u)=R(s,u), \quad 0\le s\le t \le u.
\end{align}
Upon introducing
$g(x):=R(e^{-x},1)$, self-similarity allows to reduce~\eqref{funcequation} to
\[
  g(x)g(y)=g(x+y),\quad x,y,\geq0.
\]
This observation has been used in dimension~$d=1$ to prove that certain
Gaussian processes do not have the Markov property (see, e.g.,
\cite[Theorem~2.3]{No12}).
Unifying and generalizing these results in a companion paper, we obtain a classification of all
$d$-dimensional self-similar Gaussian Markov processes. A full
classification requires
finding all solutions of~\eqref{eq:sg def}, without assuming continuity.
The boundedness assumption~\ref{ass:bd} stated below causes no problems, though.
To state our main result, define the rotation matrix
\[
    Q(\theta)=\begin{pmatrix}
    \cos(\theta) & \sin(\theta)\\
    -\sin(\theta) & \cos(\theta)
    \end{pmatrix}, \quad \theta\in\mathbb R,
\]
and, for even~$k$ and a function $\nu:\mathbb{R}\to\mathbb{R}$, the block-diagonal matrix
\begin{equation}\label{eq:def rot}
    Q_{k}^{\nu}(x):=\begin{pmatrix}Q(\nu(x)) & & 0\\ & \ddots &  \\0 & & Q(\nu(x))\end{pmatrix} \in \mathbb{R}^{k \times k}
\end{equation}
consisting of $k/2$ rotation matrices. In our statements, $\nu$ will denote some non-measurable (equivalently, non-continuous) solution of~\eqref{eq:cauchy}.
The following assumption is in force throughout the paper.
\begin{assumption}
In the following, $V$ denotes a real $d$-dimensional vector space
equipped with an inner product $\langle \cdot, \cdot\rangle$.
\end{assumption}
\begin{definition}
  We write $L(V)$  for the set of linear maps from~$V$ to itself. 
  The operator norm induced by the inner product 
  $\langle \cdot, \cdot\rangle$ is denoted by $\|\cdot\|_{\mathrm{op}}$.
  If the basis is clear from the context, we will identify elements
  of $L(V)$ and  $d\times d$ matrices.
\end{definition}
\begin{definition}
We say that $g:\mathbb{R}_{\geq0} \to L(V)$ is a semigroup if $g(0)=\operatorname{id}$ and
\[
g(x+y)=g(x)g(y)
\]
for all $x,y \ge 0$.
Semigroups acting on $V^{\mathbb{C}}$,  the complexification
of~$V$, are defined by the same property.
\end{definition}
\begin{definition}\label{def:elem}
We say that a semigroup $(g(x))_{x\geq0}$ in $L(V)$ is elementary if there exists an orthonormal basis such that
\[
  g(x)= \exp(Mx),\ x\geq0, \quad\text{or}\quad g(x) = Q_d^\nu(x)\exp(Mx),\ x\geq0,
\]
for some non-continuous $\nu$ satisfying Cauchy's equation~\eqref{eq:cauchy} and some matrix $M\in \mathbb{R}^{d\times d}$.
\end{definition}
\begin{assumption}\label{ass:bd}
   The semigroup $g(x)_{x\geq0}$ satisfies $\|g(x)\|_{\mathrm{op}}\leq f(x)$,
   $x\geq0$, where $f:[0,\infty)\to\mathbb{R}$ is locally bounded,
   right-continuous at~$0$ and satisfies $f(0)=1$.
\end{assumption}
We can now state our main theorem, which will be proven
at the end of Section~\ref{se:proof}.
\begin{theorem}\label{thm:main}
\label{semigroupdecomposition}
Let $(g(x))_{x\ge 0}$ be a semigroup satisfying Assumption~\ref{ass:bd}. Then there exists an orthogonal decomposition $V=\bigoplus_{i=1}^n V_i$ such that each $V_i$ is invariant under $g(x)$, and either $g(x)$ is elementary on $V_i$ or $g(x)|_{V_i} = 0$ for $x>0$.
\end{theorem}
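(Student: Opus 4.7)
The plan is to combine three ingredients prepared earlier in the paper: the orthogonal primary decomposition of Section~\ref{se:pd}, the Jordan--Chevalley decomposition of a semigroup from Section~\ref{se:jc}, and the Cauchy-equation lemmas of Appendix~\ref{se:cauchy}. The strategy is to split $V$ orthogonally into $g$-invariant summands on which the spectral data are homogeneous, algebraically factor the semigroup on each piece, and then recognize each factor by means of the one-dimensional Cauchy theory.

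First I would apply the primary decomposition of Section~\ref{se:pd} to obtain an orthogonal decomposition $V=\bigoplus_i V_i$ into $g(x)$-invariant subspaces on which all eigenvalues of $g(x)$ share a common modulus-and-angle behaviour. A primary component on which the eigenvalues of $g(x)$ vanish for $x>0$ directly yields the identically-zero alternative of the theorem: the semisimple part $s(x)$ of the Jordan--Chevalley decomposition (Section~\ref{se:jc}) must then equal zero for $x>0$, so $g(x)|_{V_i}=s(x)u(x)$ vanishes as well. This is consistent with $g(0)=\mathrm{id}$ because Assumption~\ref{ass:bd} only imposes right-continuity of the operator norm, not of $g$ itself.

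On a component where the eigenvalues do not vanish, I would invoke the Jordan--Chevalley decomposition to write $g(x)|_{V_i}=s(x)u(x)=u(x)s(x)$ with $s$ semisimple, $u$ unipotent, and both being semigroups. The unipotent factor yields $N(x):=\log u(x)$ as a nilpotent polynomial in $u(x)-\mathrm{id}$, and the semigroup relation together with commutativity of $u(x)$ and $u(y)$ makes $N$ additive; each scalar entry of $N(x)$ in any basis is a locally bounded Cauchy solution, hence linear by Appendix~\ref{se:cauchy}, and so $u(x)=\exp(Nx)$ for a fixed nilpotent $N$. For the semisimple factor, complexifying and simultaneously diagonalizing $s(x)$ produces eigenvalues $e^{a(x)+\im\theta(x)}$ with $a,\theta$ additive. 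Local boundedness forces $a(x)$ to be linear, while $\theta(x)$, whose exponential stays on the unit circle regardless of regularity, may be any (possibly non-measurable) Cauchy solution. Since we are on a single primary component, all such $\theta$'s coincide (up to conjugation between paired eigenvalues) and equal a common $\nu$; passing back to a real orthonormal basis turns $s(x)$ into $Q_k^\nu(x)\exp(Ax)$ with $k=\dim V_i$ and a suitable real $A$ absorbing the moduli. Combining this with $u(x)=\exp(Nx)$ and using $su=us$ gives $g(x)|_{V_i}=Q_k^\nu(x)\exp(Mx)$, which is elementary; if $\nu$ happens to be continuous, $Q_k^\nu$ can be absorbed into the exponential and we land in the first alternative of Definition~\ref{def:elem}.

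The main obstacle will be the semisimple analysis: ensuring that a single scalar $\nu$ governs the entire rotational part of $V_i$ rather than an independent Cauchy solution per conjugate pair of eigenvalues, and that the real change of basis converting the complex diagonalization of $s(x)$ into the block form~\eqref{eq:def rot} can be chosen orthogonal and compatible with the orthogonal splitting $\bigoplus_i V_i$ fixed at the outset. This is where the strength of the primary decomposition from Section~\ref{se:pd} must be invoked precisely, since it is the reason one can merge all rotational blocks on $V_i$ into a common $\nu$. The remaining bookkeeping---assembling $A$, $N$ and $\nu$ into $M$ and verifying the orthogonality claim globally---is routine.
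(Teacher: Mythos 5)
The algebraic skeleton you describe --- primary decomposition, multiplicative Jordan--Chevalley factorization, local boundedness forcing linearity of the modulus parts and of the nilpotent logarithm --- does match the paper's. But there is a genuine gap at the opening move: the primary decomposition of Section~\ref{se:pd} (Theorems~\ref{decomp} and~\ref{col}) does \emph{not} yield an orthogonal decomposition. It produces $g$-invariant generalized-eigenspace summands, which for a non-normal operator are typically not mutually orthogonal. The orthogonality asserted in Theorem~\ref{thm:main} is precisely what must be extracted from Assumption~\ref{ass:bd}, and this is where the bulk of the paper's proof lives; your outline treats it as given.

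The paper establishes orthogonality at three separate places, none of which your proposal supplies. (i) Components $V_i$ whose angles $\nu_i$ are Cauchy-equivalent are \emph{grouped} into blocks $W_l$ (the partitioned SRPD, Definition~\ref{def:psrpd}), because such $V_i$ need not be mutually orthogonal and an elementary form $Q^{\nu}_d(x)\exp(Mx)$ can carry only one $\nu$ per block, the linear differences being absorbed into $M$. (ii) Pairwise orthogonality of distinct blocks is proved in Theorem~\ref{thm2} using Lemma~\ref{sequence}, which for non-equivalent Cauchy solutions produces $x_n\to 0$ with $\eta_l(x_n)\to\pi$ while $\eta_m(x_n)$ stays away from $\pm\pi$, together with a polarization argument on $v\pm\tilde u$ and the inequality~\eqref{positivity}; here the right-continuity $f(0^+)=1$ is essential. (iii) Within one block the similarity $S(x)=A_i Q^{\eta_i}(x)A_i^{-1}$ must be upgraded to an \emph{orthogonal} conjugation, which is Corollary~\ref{colelementary} and requires first knowing that $S(x)$ is an isometry --- again a consequence of the boundedness, not of the algebra. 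Finally, the kernel component is shown orthogonal to the invertible part in Corollary~\ref{orthogonal} via the limit of Lemma~\ref{limit}, not by the multiplicative Jordan--Chevalley decomposition you invoke there: $g$ is not invertible on the kernel, so the factorization $s(x)u(x)$ does not exist, and the paper instead uses the Cayley--Hamilton argument $g(y)v=g(y/\dim V_i)^{\dim V_i}v=0$. Without these boundedness-driven limiting arguments the orthogonality claim is merely asserted, so the proof as outlined does not close.
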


\section{Primary decomposition of semigroups}
\label{se:pd}

\begin{definition}
Let $S$ be a linear operator on the vector space $V$. A subspace $U \subseteq V$ is called $S$-invariant if $S$ maps $U$ into $U$.
\end{definition}
%
%
The primary decomposition theorem from linear algebra \cite[Theorem 1.5.1]{OMClVi11}
 decomposes a vector space
into invariant subspaces for a given operator. 
On each subspace, the operator has a single real eigenvalue or
a pair of conjugate complex eigenvalues.
Instead of a single operator, we need the following
version for semigroups.
\begin{theorem}[Primary decomposition]
\label{decomp}
For any semigroup $(g(x))_{x \ge 0}$ of linear maps acting on $V$ there exists a decomposition $V=\bigoplus_{i=1}^n V_i$ with $\dim(V_i)\ge 1$
 such that each $V_i$ is $g(x)$-invariant for all $x\ge 0$, and for all~$i$ one of the following holds:
 \begin{itemize}
   \item[(1)] For all $x\geq0$, $g(x)$ has one eigenvalue $\lambda(x)\geq0$ on~$V_i$,
   \item[(2)]  For all $x\geq0$, $g(x)$ has eigenvalues
    $\lambda(x),\overline{\lambda}(x) \in \mathbb{C}$ on~$V_i$. They may coincide (and thus
    be real) for some values
    of~$x$, but not for all $x\geq0$.
 \end{itemize}
\end{theorem}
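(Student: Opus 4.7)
The plan is to exploit the fact that the semigroup property $g(x)g(y)=g(x+y)=g(y)g(x)$ makes $\{g(x):x\ge0\}$ a commuting family of operators, for which a simultaneous primary (generalized eigenspace) decomposition is available. I would first work over the complexification $V^{\mathbb{C}}$ and then descend to $V$ via complex conjugation.

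For each $x\ge0$, the classical primary decomposition gives $V^{\mathbb{C}}=\bigoplus_{\mu\in\sigma(g(x))}K_\mu(x)$ with $K_\mu(x):=\ker((g(x)-\mu I)^d)$. Since every $g(y)$ commutes with $g(x)$, it preserves each $K_\mu(x)$, and one may iteratively refine by applying the primary decomposition of $g(y)|_{K_\mu(x)}$ inside each current block. A non-trivial refinement strictly increases the total number of blocks, which is bounded by $d$, so this process stabilises after finitely many values $x_1,\dots,x_{k^*}$ with $k^*\le d-1$. The stable decomposition $V^{\mathbb{C}}=\bigoplus_i U_i$ then has the property that every $U_i$ is $g(x)$-invariant for all $x$ and $g(x)|_{U_i}$ has a single generalized eigenvalue $\lambda_i(x)\in\mathbb{C}$ (otherwise primary decomposition by $g(x)$ would still refine). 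Moreover, distinct blocks carry distinct eigenvalue functions: two blocks with the same $\lambda_i$ would lie inside $K_{\lambda_i(x)}(x)$ for every $x$, so no step of the refinement could have separated them.

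Complex conjugation is a real-linear involution of $V^{\mathbb{C}}$ commuting with every real operator $g(x)$, so it permutes the $U_i$'s, sending a block with eigenvalue function $\lambda_i$ to the (unique) block with eigenvalue function $\overline{\lambda_i}$. If $U_i=\overline{U_i}$, set $V_i:=U_i\cap V$; otherwise pair $U_i$ with $\overline{U_i}$ and set $V_i:=(U_i\oplus\overline{U_i})\cap V$. These real subspaces are $g(x)$-invariant, sum to $V$, and have real dimension at least one. In the self-conjugate case $\lambda_i$ is real-valued and $g(x)|_{V_i}$ has the single eigenvalue $\lambda_i(x)$; writing $g(x)|_{V_i}=\lambda_i(x)I+N(x)$ with $N(x)$ nilpotent and comparing scalar parts in $g(x)g(y)=g(x+y)$ (the commuting nilpotents recombine into another nilpotent) yields $\lambda_i(x)\lambda_i(y)=\lambda_i(x+y)$, whence $\lambda_i(x)=\lambda_i(x/2)^2\ge0$, establishing case~(1). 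In the paired case the spectrum of $g(x)|_{V_i}$ is $\{\lambda_i(x),\overline{\lambda_i}(x)\}$, and the uniqueness-of-label remark combined with $U_i\ne\overline{U_i}$ forces $\lambda_i\not\equiv\overline{\lambda_i}$, so the eigenvalues are non-real for at least one $x$, establishing case~(2).

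The main obstacle I expect is formulating the iterative refinement cleanly and proving the uniqueness of the eigenvalue-function label. That label uniqueness is what rules out a case-(1) subspace being artificially packaged as a conjugate pair (case~(2) in disguise), and it is the place where the commutativity arising from the semigroup property is used in an essential way beyond the plain single-operator primary decomposition.
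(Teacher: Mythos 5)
Your proof is correct and takes essentially the same approach as the paper's: obtain a simultaneous primary decomposition for the commuting family $\{g(x)\}_{x\ge0}$, and then use the semigroup relation $\lambda(x)\lambda(y)=\lambda(x+y)$ to rule out negative eigenvalues in case~(1). The paper obtains the decomposition by citing a result of Jacobson on commuting algebras and derives the multiplicativity of $\lambda$ from a common eigenvector, whereas you reconstruct the decomposition from scratch by iterated refinement over $V^{\mathbb{C}}$ (descending to $V$ via complex conjugation, justified by your observation that the terminal blocks carry pairwise distinct eigenvalue functions) and derive multiplicativity from uniqueness of the Jordan--Chevalley decomposition; these are implementation variants of the same idea rather than a genuinely different route.
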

\begin{proof}
It is known that the primary decomposition extends to commuting sets of matrices. Indeed,
the decomposition $V=\bigoplus_{i=1}^n V_i$  into invariant subspaces follows from Theorem~5 on p.~40 in~\cite{Ja62},
applied to the span of the semigroup $(g(x))_{x \ge 0}$. Thus, we only need to argue why~(1)
or~(2) follows from the semigroup property. Assume that each $g(x)$ has only one eigenvalue $\lambda(x) \in \mathbb{R}$ on some $V_i$. Since the $g(x)$ commute, they share a common eigenvector~$v_i$, and thus we have $\lambda(x)\lambda(y)=\lambda(x+y)$ for $x,y\geq0$. Suppose $\lambda(x_0)<0$ for some $x_0>0$. Then we have $\lambda(x_0)=\lambda(x_0/2)^2$, and hence $\lambda(x_0/2)\not \in \mathbb{R}$, contradicting $\lambda(x)\in \mathbb{R}$ for all $x \ge 0$. Hence $\lambda(x)\in \mathbb{R}_{\ge 0}$ for all $x\geq0$.
\end{proof}
%

\begin{theorem}
\label{col}
Consider a semigroup  $g=(g(x))_{\ge 0}$
 acting on $V^{\mathbb{C}}$, the complexification of $V$. Then there exists a decomposition $V^{\mathbb{C}}=\bigoplus_{i=1}^n V_i$ with $\dim(V_i)\ge 1$ such that, for each~$i$ and all $x\geq0$, the space
 $V_i$ is $g(x)$-invariant and $g(x)$ has only one eigenvalue $\lambda(x) \in \mathbb{C}$ on~$V_i$.
\end{theorem}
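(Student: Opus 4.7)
The plan is to parallel the proof of Theorem~\ref{decomp}, but to work over the algebraically closed field $\mathbb{C}$ from the outset. First, I would invoke the primary decomposition for commuting families of linear operators (Theorem~5 on p.~40 of \cite{Ja62}) and apply it to the subalgebra of $L(V^{\mathbb{C}})$ spanned by $\{g(x):x\geq 0\}$. The semigroup relation $g(x+y)=g(x)g(y)$ implies that these operators commute, so the hypothesis of that theorem is met. The conclusion is a direct-sum decomposition $V^{\mathbb{C}}=\bigoplus_{i=1}^{n} V_i$ into jointly invariant subspaces on each of which the minimal polynomial of every operator in the family is a power of a single irreducible polynomial in $\mathbb{C}[t]$. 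Trivial summands are simply discarded so that $\dim(V_i)\geq 1$.

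The second step, and the only point at which the argument departs from that of Theorem~\ref{decomp}, is to exploit that $\mathbb{C}$ is algebraically closed: every irreducible polynomial in $\mathbb{C}[t]$ is linear. Consequently, the minimal polynomial of $g(x)|_{V_i}$ has the form $(t-\lambda(x))^{k}$ for some $\lambda(x)\in\mathbb{C}$ and some $k\geq 1$, so $g(x)|_{V_i}$ has the unique eigenvalue $\lambda(x)$. This is the desired conclusion. In contrast to the real case, no case split is necessary and no further appeal to the semigroup property is needed; in particular, the positivity argument that ruled out negative real eigenvalues in the proof of Theorem~\ref{decomp} has no counterpart here, since $\lambda(x)$ is allowed to range freely in $\mathbb{C}$.

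I do not anticipate any substantive obstacle. The most delicate point is merely to verify that Jacobson's primary decomposition for commuting families is stated for operators over an arbitrary field (and in particular over $\mathbb{C}$), but this is exactly the form in which the result is recorded in the reference already invoked in the proof of Theorem~\ref{decomp}. Thus the present statement is essentially a direct corollary of the same underlying theorem, made trivial by the fact that the field is now algebraically closed.
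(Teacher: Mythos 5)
Your proof is correct and follows the paper's approach exactly: the paper also cites Theorem~5 on p.~40 of \cite{Ja62} and observes that the conclusion is immediate over the algebraically closed field $\mathbb{C}$, where every irreducible polynomial is linear. You have simply spelled out the (very short) reasoning that the paper leaves implicit.
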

\begin{proof}
  This is an immediate consequence of Theorem~5 on p.~40 in~\cite{Ja62} (cf.\ the preceding proof).
\end{proof}
\begin{definition}\label{def:srpd}
  We call the decomposition from Theorem~\ref{decomp} simultaneous real
primary decomposition (SRPD), omitting ``w.r.t.~$g$'' if the semigroup is clear from the context.
The component~$V_i$ is of first type in case~(1), and of second type in case~(2).
Similarly, the simultaneous primary decomposition (SPD) is the decomposition from Theorem~\ref{col}.
\end{definition}
\begin{lemma}
Let~$g$ be a semigroup acting on~$V$, and
let $V_i \subseteq V$ be a subspace of first type from the SRPD of~$V$. Then there exists a common eigenvector $v\in V$ such that $g(x)v =\lambda(x)v$ for all $x>0$.
\end{lemma}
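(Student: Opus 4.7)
The plan is to first produce a common eigenvector for the commuting family $\{g(x) : x > 0\}$ inside the complexification $V_i^{\mathbb{C}}$, and then to descend to $V_i$ by exploiting that $\lambda(x) \in \mathbb{R}_{\geq 0}$.

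For the complex step, I would pick a \emph{minimal} nonzero subspace $W \subseteq V_i^{\mathbb{C}}$ that is invariant under every $g(x)$ with $x > 0$; such a $W$ exists by finite-dimensionality of $V_i^{\mathbb{C}}$. Fixing any $x > 0$, I would then consider $E := \ker(g(x)|_W - \lambda(x)\,\mathrm{id})$. By Theorem~\ref{decomp}(1) the only eigenvalue of $g(x)$ on $V_i$, and hence on $V_i^{\mathbb{C}}$, is $\lambda(x)$, so $g(x)|_W$ has $\lambda(x)$ as its only eigenvalue; since $W$ is a nonzero finite-dimensional complex space, $E \neq 0$. Every $g(y)$ commutes with $g(x) - \lambda(x)\,\mathrm{id}$ and thus preserves $E$, so by minimality of $W$ we must have $E = W$, i.e., $g(x)$ acts on $W$ as the scalar $\lambda(x)\,\mathrm{id}$. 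Since this holds for every $x > 0$, every nonzero $w \in W$ is a common eigenvector of the family in $V_i^{\mathbb{C}}$.

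To descend, take any nonzero $w \in W$ and write $w = u + \im u'$ with $u, u' \in V_i$. The relation $g(x) w = \lambda(x) w$ together with $\lambda(x) \in \mathbb{R}$ gives $g(x) u = \lambda(x) u$ and $g(x) u' = \lambda(x) u'$; as $w \neq 0$, at least one of $u, u'$ is nonzero and yields the desired real common eigenvector in $V_i$.

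The step I expect to be the main obstacle is producing a simultaneous eigenvector for the \emph{infinite} commuting family in $V_i^{\mathbb{C}}$, since the usual inductive proof of a common eigenvector is formulated for finitely many commuting operators. The minimal-invariant-subspace device above sidesteps this; the passage from $V_i^{\mathbb{C}}$ back to $V_i$ is then routine because $\lambda(x)$ is real.
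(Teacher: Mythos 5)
Your proof is correct, but it takes a different route from the paper. The paper works entirely inside the real space $V_i$, iteratively shrinking: starting from $V_i^{(0)} = V_i$, it finds $x_0$ with a strict eigenspace $V_i^{(1)} \subsetneq V_i^{(0)}$, shows $V_i^{(1)}$ is again invariant under the whole family (by the same commutation computation you use for $E$), and repeats; strict dimension drop forces termination at a subspace of common eigenvectors. You instead pass to the complexification, select a \emph{minimal} nonzero invariant subspace $W \subseteq V_i^{\mathbb{C}}$ all at once, show $g(x)|_W$ is scalar by minimality, and then descend to $V_i$ by splitting into real and imaginary parts. Both arguments lean on finite-dimensionality (strictly decreasing dimensions vs.\ existence of a minimal invariant subspace) and on the same commutation observation; yours reaches the conclusion in one shot rather than by iteration, which is arguably cleaner. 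One small point: the detour through $V_i^{\mathbb{C}}$, and hence the descent step, is unnecessary. Since $V_i$ is of first type, the only eigenvalue $\lambda(x)$ is real and nonnegative, so $\ker(g(x)|_{V_i} - \lambda(x)\,\mathrm{id})$ is already a nonzero real subspace; you could run the minimal-invariant-subspace argument directly in $V_i$ and skip complexification entirely.
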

\begin{proof}
We present an algorithm which yields the subspace of common eigenvectors. If $V_i=:V_i^{(0)}$ itself is this subspace, then we are done. Otherwise there exists $x_0>0$ such that the eigenspace $V_i^{(1)}$ of $g(x_0)$ is a strict subspace of $V_i$. For any $y>0$ and any $v \in V_i^{(1)}$ we have
\begin{align*}
0=g(y)0&=g(y)[\lambda(x_0)v-g(x_0)v]\\
 &=\lambda(x_0)[g(y)v]-g(x_0)[g(y)v].
\end{align*}
It follows that $V_i^{(1)}$ is $g(x)$-invariant for all $x>0$. Now either $V_i^{(1)}$ consists of common eigenvectors, or we can again find $x_1>0$ such that the eigenspace $V_i^{(2)}$ of $g(x_1)$ in $V_i^{(1)}$ is a strict subspace of $V_i^{(1)}$. Repeating this argument yields a sequence of nontrivial subspaces $V_i^{(n)}$ whose dimensions are strictly decreasing, hence it has to terminate. Clearly, the final vector space in this sequence is  the space of common eigenvectors of the semigroup. 
\end{proof}
\begin{lemma}
Let~$g$ be a semigroup acting on~$V$,
and let $V_i$ be a subspace from the SRPD of~$V$ such that there exists $x_0>0$ with $\lambda(x_0)=0$. Then we have $\lambda(x)=0$ for all $x>0$. Furthermore $V_i \subseteq \operatorname{ker}(g(x))$ for all $x>0.$
\end{lemma}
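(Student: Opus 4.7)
The plan is to reduce the problem to a scalar multiplicative equation on a single eigenvector, use it to propagate the vanishing at $x_0$ to every positive argument, and then bootstrap from ``nilpotent'' to ``zero'' via a second application of the semigroup property.

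First I would secure a common (possibly complex) eigenvector along which $\lambda$ is read off. In case~(1) of the SRPD, the preceding lemma already supplies a common real eigenvector $v\in V_i$ with $g(x)v=\lambda(x)v$ for $x>0$. In case~(2), running the same nested-eigenspace construction inside the complexification $V_i^{\mathbb{C}}$ produces a common complex eigenvector for $\lambda(x)\in\mathbb{C}$; the argument is valid because the $g(x)$ still commute and a strictly decreasing chain of nontrivial invariant subspaces must terminate. Applying $g(x+y)=g(x)g(y)$ to this eigenvector yields the scalar semigroup equation $\lambda(x+y)=\lambda(x)\lambda(y)$ for $x,y\ge 0$.

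Next I would propagate the vanishing. Given $x>0$, choose a positive integer $n$ with $x_0/n\le x$. Then $\lambda(x_0/n)^n=\lambda(x_0)=0$ forces $\lambda(x_0/n)=0$ (in $\mathbb{R}$ or $\mathbb{C}$), and hence
\[
  \lambda(x)=\lambda(x_0/n)\,\lambda(x-x_0/n)=0.
\]
In case~(2) we also have $\overline{\lambda}(x)=\overline{\lambda(x)}=0$, so every complex eigenvalue of $g(x)|_{V_i}$ vanishes for each $x>0$. This already yields the first assertion $\lambda(x)=0$, and moreover shows that $g(x)|_{V_i}$ is nilpotent.

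Finally I would upgrade nilpotency to outright vanishing, which I view as the crux of the proof. Let $d_i:=\dim V_i$. Since $g(y)|_{V_i}$ is nilpotent, Cayley--Hamilton gives $\bigl(g(y)|_{V_i}\bigr)^{d_i}=0$ for every $y>0$. For arbitrary $x>0$, set $y=x/d_i$; the semigroup property together with $g(y)$-invariance of $V_i$ then yields
\[
  g(x)|_{V_i}=g(y)^{d_i}\big|_{V_i}=\bigl(g(y)|_{V_i}\bigr)^{d_i}=0,
\]
proving $V_i\subseteq\ker g(x)$. The main subtlety is this last step: nilpotency of each $g(y)|_{V_i}$ alone is not strong enough to force $g(x)|_{V_i}=0$, but the uniform index bound $d_i$ combined with the semigroup law allows the power to be absorbed into the argument. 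Transporting the common-eigenvector construction to the complexification in case~(2) is the other point needing a brief check, but is otherwise routine.
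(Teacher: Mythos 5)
Your proposal is correct and follows essentially the same route as the paper: secure a common eigenvector (via the preceding lemma in case~(1) and the complexification in case~(2)), derive $\lambda(x+y)=\lambda(x)\lambda(y)$, propagate the zero using $\lambda(x_0/n)^n=\lambda(x_0)=0$, and then pass from nilpotency to $g(x)|_{V_i}=0$ by writing $g(x)=g(x/d_i)^{d_i}$ and invoking Cayley--Hamilton. You spell out the case~(2) complexification a bit more explicitly than the paper's parenthetical remark, but the key steps are identical.
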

\begin{proof}
By the previous lemma there exists a common eigenvector $v \in V_i$. (Since commuting matrices are simultaneously triangularizable it follows that they share a common eigenvector $v \in V^\mathbb{C}$.) We have
\[
\lambda(x+y)v=g(x+y)v=g(x)g(y)v=\lambda(x)\lambda(y)v.
\]
Hence $\lambda$ satisfies $\lambda(x)\lambda(y)=\lambda(x+y)$. Since $\lambda(x_0)=0$ we have $\lambda(x_0/n)=0$ for all $n \in \mathbb{N}$ since $\lambda(x_0/n)^n=0$. Let $y >0$, then there exists $n \in \mathbb{N}$ such that $x_0/n <y$. We obtain
\[
\lambda(y)=\lambda(x_0/n)\lambda(y-x_0/n)=0.
\]
Since $\lambda(y)=0$ for all $y>0$ it follows that the characteristic polynomial of $g(y)$ satisfies $\chi_{g(y)}(Z)=Z^{\operatorname{dim}(V_i)}$
 and hence, by the Cayley--Hamilton theorem, $g(y)^{\operatorname{dim}(V_i)}\equiv 0$. For $v \in V_i$ we obtain
\[
g(y)v=g(y/{\operatorname{dim}(V_i)})^{\operatorname{dim}(V_i)}v=0. \qedhere
\]
\end{proof}
\begin{corollary}
Let~$g$ be a semigroup acting on~$V$, with SRPD
$V=\bigoplus_{i=1}^n V_i$.
For any $x>0$ we have $\operatorname{ker}(g(x))=\bigoplus_{i\in I} V_i$, where 
\[
I=\{i\, |\, V_i \text{ of type 1 with } \lambda \equiv 0\}.
\]
\end{corollary}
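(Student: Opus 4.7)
The plan is to establish the set equality by proving the two inclusions separately. The inclusion $\bigoplus_{i\in I} V_i \subseteq \ker(g(x))$ is immediate from the preceding lemma, which asserts precisely that $V_i \subseteq \ker(g(y))$ for every $y>0$ whenever $V_i$ is of first type with $\lambda\equiv 0$.

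For the reverse inclusion, I would fix $x>0$, take $v\in\ker(g(x))$, and write $v=\sum_{i=1}^n v_i$ with $v_i\in V_i$ according to the SRPD. Because each $V_i$ is $g(x)$-invariant, we have $g(x)v_i\in V_i$, and the uniqueness of the direct sum decomposition forces $g(x)v_i=0$ for every $i$. It then suffices to show that $g(x)|_{V_i}$ is injective whenever $i\notin I$, which will push each such $v_i$ to zero and leave $v\in\bigoplus_{i\in I}V_i$.

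For $i\notin I$ there are two cases: either $V_i$ is of first type with $\lambda\not\equiv 0$, or $V_i$ is of second type. In the first case, the previous lemma in contrapositive form yields $\lambda(y)\neq 0$ for every $y>0$; since $\lambda(x)$ is the only eigenvalue of $g(x)|_{V_i}$, the restriction is invertible. In the second case, if we had $\lambda(x_0)=0$ for some $x_0>0$, then also $\overline{\lambda(x_0)}=0$, so both eigenvalues of $g(x_0)|_{V_i}$ vanish; picking a common eigenvector of the commuting family $\{g(y)\}_{y\geq 0}$ in the complexification $V_i^{\mathbb C}$ gives a scalar semigroup relation $\lambda(y)\lambda(z)=\lambda(y+z)$ with $\lambda(x_0)=0$, and the same dyadic argument as in the previous lemma then forces $\lambda\equiv 0$ on $(0,\infty)$. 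But then both eigenvalues are identically zero, hence real and coinciding for every $y$, contradicting the defining property of a second-type subspace. Thus $\lambda(y)\neq 0$ for all $y>0$ on type-2 subspaces as well, and $g(x)|_{V_i}$ is again invertible. I expect the main subtlety to be this last step, since the previous lemma is phrased for a single real eigenvalue and its reasoning must be lifted to the complexification via a complex common eigenvector before the contradiction with type~2 can be extracted.
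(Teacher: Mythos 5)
Your proof is correct and follows essentially the same route as the paper's: the inclusion $\bigoplus_{i\in I}V_i\subseteq\ker(g(x))$ from the previous lemma, and the reverse inclusion by showing that $g(x)$ is invertible on each $V_j$ with $j\notin I$. You actually supply more detail than the paper does---the paper simply asserts ``since $g(x)$ is invertible on $V_j$''---and your case analysis (including lifting the dyadic vanishing argument to a common eigenvector in $V_i^{\mathbb{C}}$ for type-2 blocks, then noting that both eigenvalues vanish simultaneously and thus coincide identically, contradicting the type-2 definition) is exactly the justification the paper leaves implicit.
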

\begin{proof}
If $V_i \subseteq \operatorname{ker}(g(x))$, then $0=\lambda(x) \in \mathbb{R}$ for all $x \ge 0$ and hence $V_i$ is of type 1. By the previous lemma we have
\[
\operatorname{ker}(g(x)) \supseteq \bigoplus_{i\in I} V_i.
\]
If the inclusion was strict, then there would exist $V_j$ with $j \not \in I$ such that $\operatorname{ker}(g(x))\cap V_j \not = \emptyset$. But since $g(x)$ is invertible on $V_j$ this gives a contradiction, hence we have equality.
\end{proof}
\begin{corollary}
\label{zero}
Let $g$ be a semigroup acting on~$V$. Then there exists a decomposition $V=V_1 \oplus V_2$, such that $g(x)|_{V_1}$ is invertible for all $x \ge 0$ and $g(x)|_{V_2}\equiv 0$ for $x>0$.
\end{corollary}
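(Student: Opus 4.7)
The plan is to build the decomposition directly from the SRPD $V = \bigoplus_{i=1}^n V_i$ produced by Theorem~\ref{decomp}, grouping the components according to whether or not the associated eigenvalue function vanishes identically on $(0,\infty)$. Set
\[
  V_2 := \bigoplus_{i\in I} V_i, \qquad V_1 := \bigoplus_{i\notin I} V_i,
\]
where $I$ is the index set from the previous corollary, namely $I=\{i\mid V_i\text{ is of type 1 with }\lambda\equiv 0\}$. Then $V=V_1\oplus V_2$, and each summand is $g(x)$-invariant for every $x\ge 0$ because it is a direct sum of SRPD components. The previous corollary identifies $V_2=\ker(g(x))$ for every $x>0$, which gives $g(x)|_{V_2}\equiv 0$ for $x>0$ at no extra cost.

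It remains to show that $g(x)|_{V_1}$ is invertible for all $x\ge 0$. Since $g(0)=\operatorname{id}$ the case $x=0$ is immediate, so fix $x>0$. Because $V_1$ is a direct sum of $g(x)$-invariant components, it suffices to prove invertibility on each $V_i$ with $i\notin I$. On the complexification $V_i^{\mathbb{C}}$, the matrix $g(x)|_{V_i}$ is invertible if and only if all of its complex eigenvalues are nonzero, so the task reduces to showing that the eigenvalue function never vanishes on $(0,\infty)$.

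For components of first type with $i\notin I$ this is exactly the content of the lemma preceding the previous corollary: if $\lambda(x_0)=0$ for some $x_0>0$ then $\lambda\equiv 0$, contradicting $i\notin I$. For components of second type we argue identically. A common eigenvector in the complexification yields $\lambda(x+y)=\lambda(x)\lambda(y)$ for the complex eigenvalue $\lambda$; if $\lambda(x_0)=0$ for some $x_0>0$, then $\lambda(x_0/n)^n=0$ forces $\lambda(x_0/n)=0$ for every $n\in\mathbb{N}$, and for arbitrary $y>0$ one picks $n$ with $x_0/n<y$ to obtain $\lambda(y)=\lambda(x_0/n)\lambda(y-x_0/n)=0$. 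Thus $\lambda\equiv 0$ on $(0,\infty)$, which together with $\bar\lambda\equiv 0$ contradicts the defining property of type~2 that the two eigenvalues do not coincide for all~$x\ge 0$.

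Combining these observations, every eigenvalue of $g(x)|_{V_1}$ is nonzero for $x>0$, hence $g(x)|_{V_1}$ is invertible, completing the proof. The only mildly delicate step is the type-2 case above, but once one notes that $\lambda\equiv 0$ on $(0,\infty)$ degenerates the pair $\{\lambda,\bar\lambda\}$ into a single eigenvalue for every $x>0$, the contradiction with Definition~\ref{def:srpd}(2) is immediate.
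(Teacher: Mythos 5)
Your proof is correct and follows the route the paper leaves implicit: take $V_2$ to be the direct sum of the SRPD components with $\lambda\equiv 0$ (which by the preceding corollary equals $\ker g(x)$ for every $x>0$, independently of $x$) and $V_1$ the direct sum of the remaining components. The eigenvalue case analysis you give for invertibility on $V_1$ is valid (and the type-2 step is in fact already covered by the paper's Lemma preceding the corollary, whose parenthetical handles general SRPD components via simultaneous triangularization), but it can be shortened: since $V_1\cap\ker g(x)=V_1\cap V_2=\{0\}$ and $V_1$ is a finite-dimensional $g(x)$-invariant subspace, $g(x)|_{V_1}$ is injective and hence invertible.
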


\section{Multiplicative Jordan--Chevalley decomposition of semigroups}\label{se:jc}

Due to Corollary~\ref{zero}, from now on we assume in most of our statements
that~$g(x)$ is invertible for all $x\geq0$.
%
%
A standard result from linear algebra, the Jordan--Chevalley decomposition, asserts that any matrix $A$ can be uniquely decomposed as $A=D+N$, where $D$ is diagonalizable, $N$ is nilpotent and $D$ and $N$ commute. 
 If $A$ is invertible, then we can express it as $A=D(\operatorname{id}+D^{-1}N):=DT$ with $T$  unipotent and commuting with $D$.
\begin{definition}
For an invertible linear map $A$ on $\mathbb{K}^d$ with $\mathbb{K} \in \{\mathbb{R},\mathbb{C} \}$, the multiplicative decomposition $A=DT$ into commuting factors with $D$  diagonalizable and $T$  unipotent, is called the multiplicative Jordan--Chevalley decomposition. 
\end{definition}
For background on the (multiplicative) Jordan--Chevalley decomposition, we refer to Section~15.1 in~\cite{Hu75}. We now analyze the structure of the multiplicative Jordan--Chevalley decomposition
of a semigroup.
\begin{theorem}
Let $(g(x))_{x\ge 0}$ be a semigroup of invertible linear maps acting on $\mathbb{K}^d$ with $\mathbb{K} \in \{\mathbb{R},\mathbb{C} \}$ and let $g(x)=D(x)T(x)$ be the multiplicative Jordan--Chevalley decomposition of each $g(x)$. Then $((D(x))_{x\ge 0})$ and $(T(x))_{x\ge 0}$ each form a semigroup, and the two families commute with each other, i.e. $T(x)D(y)=D(y)T(x)$ for all $x,y \ge 0$.
\end{theorem}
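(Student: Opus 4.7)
The plan is to exploit the uniqueness of the multiplicative Jordan--Chevalley decomposition together with the classical fact that in the (additive or multiplicative) Jordan--Chevalley decomposition, the semisimple and unipotent parts can be written as polynomials in the original operator. Concretely, for invertible $A$ with decomposition $A=DT$, one first writes $A=D+N$ additively and uses that $D$ and $N$ are polynomials in $A$ (Section~15.1 in \cite{Hu75}); since $D$ is invertible whenever $A$ is, $D^{-1}$ is again a polynomial in $D$ and hence in $A$, and so $T=D^{-1}A$ is a polynomial in $A$.

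The semigroup property $g(x+y)=g(x)g(y)$ immediately gives $g(x)g(y)=g(y)g(x)$ for all $x,y\ge 0$. Because $D(x),T(x)$ are polynomials in $g(x)$ and $D(y),T(y)$ are polynomials in $g(y)$, it follows at once that the four operators $D(x), T(x), D(y), T(y)$ pairwise commute. In particular the claim $T(x)D(y)=D(y)T(x)$ drops out of this observation, which takes care of the commutation part of the theorem.

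For the semigroup property of $(D(x))$ and $(T(x))$, I would compute
\[
  g(x+y) = g(x)g(y) = D(x)T(x)D(y)T(y) = \bigl(D(x)D(y)\bigr)\bigl(T(x)T(y)\bigr),
\]
using the pairwise commutation just established, and then verify that the right-hand side is itself a multiplicative Jordan--Chevalley decomposition of $g(x+y)$. The two commuting diagonalizable operators $D(x)$ and $D(y)$ are simultaneously diagonalizable, so their product $D(x)D(y)$ is diagonalizable. Writing $T(x)=\operatorname{id}+N_1$ and $T(y)=\operatorname{id}+N_2$ with $N_1,N_2$ commuting and nilpotent, we get $T(x)T(y)=\operatorname{id}+N_1+N_2+N_1N_2$, whose nilpotent part is a polynomial without constant term in the commuting nilpotents $N_1,N_2$ and is therefore nilpotent; hence $T(x)T(y)$ is unipotent. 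Finally, $D(x)D(y)$ and $T(x)T(y)$ commute because all four factors do. By uniqueness of the multiplicative Jordan--Chevalley decomposition, we conclude $D(x+y)=D(x)D(y)$ and $T(x+y)=T(x)T(y)$. The initial conditions $D(0)=T(0)=\operatorname{id}$ follow from applying the same uniqueness to $g(0)=\operatorname{id}=\operatorname{id}\cdot\operatorname{id}$.

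The proof is essentially a bookkeeping exercise once the polynomial representation is invoked; the only mildly delicate point is verifying that $T(x)T(y)$ is unipotent, which is immediate once commutativity is in hand, so I do not expect a genuine obstacle here.
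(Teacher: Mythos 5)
Your proof is correct, and it takes a genuinely different route from the paper's. The paper first reduces to $\mathbb{K}=\mathbb{C}$ via uniqueness, then invokes the simultaneous primary decomposition (Theorem~\ref{col}) to split $V^{\mathbb{C}}$ into pieces on which each $g(x)$ has a single eigenvalue $\lambda_i(x)$. On such a piece $D_i(x)=\lambda_i(x)\operatorname{id}$ is a scalar, so the semigroup law for $D_i$ reduces to the multiplicativity of $\lambda_i$, and the semigroup law for $T_i(x)=\lambda_i(x)^{-1}g_i(x)$ follows by direct computation; commutation is trivial because $D_i$ is central. Your argument instead rests on the classical fact that the semisimple and unipotent parts $D(x),T(x)$ lie in the polynomial algebra $\mathbb{K}[g(x)]$. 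Combined with $g(x)g(y)=g(y)g(x)$, this gives pairwise commutation of $D(x),T(x),D(y),T(y)$ immediately, and then the identification $D(x+y)=D(x)D(y)$, $T(x+y)=T(x)T(y)$ drops out of uniqueness once one checks that a product of commuting semisimple maps is semisimple and a product of commuting unipotent maps is unipotent. Your route is shorter, avoids the $\mathbb{R}\to\mathbb{C}$ reduction entirely (the polynomial fact holds over any perfect field, in particular $\mathbb{R}$), and does not depend on the SPD machinery; the paper's route fits naturally into its broader development, where the SPD is built and reused throughout Sections~\ref{se:pd}--\ref{se:proof}. One small caveat worth making explicit in a write-up: ``diagonalizable'' in the theorem should be read as semisimple (diagonalizable over the algebraic closure) when $\mathbb{K}=\mathbb{R}$, and the claim that $D(x)D(y)$ is again semisimple uses simultaneous diagonalization over $\mathbb{C}$. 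You state this implicitly; spelling it out would make the argument airtight.
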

\begin{proof}
We can w.l.o.g.\ assume that $\mathbb{K}=\mathbb{C}$, since by uniqueness of the Jordan--Chevalley decomposition $D(x) \in \mathbb{R}^{d \times d}$ and $T(x) \in \mathbb{R}^{d \times d}$ if $g(x) \in \mathbb{R}^{d \times d}$ for $x \ge 0$. Take the SPD from Theorem~\ref{col}, $V=\bigoplus_{i=1}^k V_i$, so that each $g(x)$ has
only one eigenvalue on $V_i$. Denote by $g(x)|_{V_i}=D_i(x)T_i(x)$ the multiplicative Jordan--Chevalley decomposition $g(x)$ restricted to $V_i$. Denote by $\lambda_i(x)$ the eigenvalue of $g(x)$ on $V_i$. Clearly, $D_i(x)=\lambda_i(x)\operatorname{id}$. Since the $(g_i(x))_{x \ge 0}$ are a commuting family of matrices, they share a common eigenvector $v_i \in V_i$. We have
\[
\lambda_i(x)\lambda_i(y)v_i=g_i(x)g_i(y)v_i=g_i(x+y)v_i=\lambda_i(x+y)v_i,
\]
and hence $(D_i(x))_{x \ge 0}$ is a semigroup. Since each $D_i(x)$ is a multiple of the identity, it commutes with every linear map and hence
\[
T_i(x)T_i(y)=\frac{1}{\lambda_i(x)\lambda_i(y)}g_i(x)g_i(y)=\frac{1}{\lambda_i(x+y)}g_i(x+y)=T_i(x+y),
\]
which shows that $(T_i(x))_{x \ge 0}$ is also a semigroup. The result then follows, since by uniqueness $(\bigoplus D_i(x))\otimes(\bigoplus T_i(x))$ is the multiplicative Jordan--Chevalley decomposition of $g(x)$.
\end{proof}
\begin{theorem}
\label{exponential}
Let $(g(x))_{x\ge 0}$ be a semigroup of invertible linear maps acting on $V$ and let $g(x)=D(x)T(x)$
be its multiplicative Jordan--Chevalley decomposition.
 Then there exist commuting real diagonalizable linear maps $J(x)$ and commuting real nilpotent linear maps $N(x)$ satisfying
\begin{center}
\begin{enumerate}
    \item $J(x)+J(y)=J(x+y)$
    \item $N(x)+N(y)=N(x+y)$
    \item $J(x)N(y)=N(y)J(x)$ for all $x,y \ge 0$
\end{enumerate}
\end{center}
such that
\[
D(x)=\exp(J(x)),\quad T(x)=\exp(N(x))
\]
and
\[
g(x)=\exp(J(x)+N(x)).
\]
\end{theorem}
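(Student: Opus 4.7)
The plan is to construct $N(x)$ and $J(x)$ independently from the unipotent and diagonalizable factors of the Jordan--Chevalley decomposition, then combine. For the unipotent factor, since $T(x)-\mathrm{id}$ is nilpotent the truncated series
\[
N(x):=\log T(x)=\sum_{k\ge 1}\frac{(-1)^{k+1}}{k}\bigl(T(x)-\mathrm{id}\bigr)^k
\]
defines a real nilpotent linear map with $\exp(N(x))=T(x)$. The $N(x)$ commute pairwise because each is a polynomial in the commuting $T(x)$. For additivity, the semigroup law gives $T(x+y)=T(x)T(y)=\exp(N(x))\exp(N(y))$, and commutativity of $N(x),N(y)$ rewrites the right-hand side as $\exp(N(x)+N(y))$; injectivity of $\exp$ on nilpotent matrices (with inverse $\log$) then yields $N(x)+N(y)=N(x+y)$.

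For the diagonalizable factor, pass to $V^{\mathbb{C}}$ and invoke Theorem~\ref{col} on the commuting family $\{D(x)\}$, obtaining a decomposition $V^{\mathbb{C}}=\bigoplus_{j}W_j$ with $D(x)|_{W_j}=\lambda_j(x)\,\mathrm{id}_{W_j}$ for some multiplicative semigroup homomorphism $\lambda_j:\mathbb{R}_{\ge0}\to\mathbb{C}^*$. Writing $\lambda_j(x)=|\lambda_j(x)|\,e^{i\phi_j(x)}$, the map $x\mapsto\log|\lambda_j(x)|$ is automatically additive; for the argument $\phi_j$, I would construct an additive lift $\tilde\phi_j:\mathbb{R}_{\ge0}\to\mathbb{R}$ via a Hamel basis / Zorn's lemma argument, exploiting the freedom of non-measurable solutions of Cauchy's equation~\eqref{eq:cauchy}. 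Put $c_j(x):=\log|\lambda_j(x)|+i\tilde\phi_j(x)$ and $J(x)|_{W_j}:=c_j(x)\,\mathrm{id}$. The SPD pairs each non-self-conjugate $W_j$ with a partner $\overline{W_j}=W_k$ carrying eigenvalue $\overline{\lambda_j(x)}$; choosing $c_k:=\overline{c_j}$ on paired components (and noting that a self-conjugate $W_j$ must have $\lambda_j(x)>0$ by invertibility, so $c_j$ can be chosen real) makes $J(x)$ commute with complex conjugation. Hence $J(x)$ is a real diagonalizable linear map with $\exp(J(x))=D(x)$ and $J(x)+J(y)=J(x+y)$.

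For commutation and the final exponential identity: the $J(x)$'s are simultaneously diagonal in the chosen basis of $V^{\mathbb{C}}$, so they commute pairwise. Since $T(y)$, being a polynomial in $g(y)$, preserves each $W_j$, so does $N(y)=\log T(y)$; and $J(x)$ being a scalar on $W_j$ commutes with $N(y)|_{W_j}$. In particular $J(x)$ and $N(x)$ commute, giving
\[
\exp\bigl(J(x)+N(x)\bigr)=\exp(J(x))\exp(N(x))=D(x)T(x)=g(x).
\]
The delicate step is the construction of the additive lift $\tilde\phi_j$ of the argument of $\lambda_j$: this is precisely where the theory of non-measurable solutions of Cauchy's functional equation must be brought to bear, while the rest of the argument is essentially a bookkeeping exercise on the SPD.
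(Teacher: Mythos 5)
Your proof is correct but takes a genuinely different route through both halves of the argument. For the unipotent factor you observe that $N(x)=\log T(x)$ is a polynomial in $T(x)$, so the $N(x)$ commute simply because the $T(x)$ do; this replaces the paper's longer derivation (which extracts commutativity by a sequence of conjugation identities, introducing an auxiliary invertible matrix $N_i^t(x)=t\,\mathrm{id}+N_i(x)$) with a one-liner and is a genuine simplification. For the diagonalizable factor the paper works in the \emph{real} primary decomposition (SRPD), treats first- and second-type components separately, complexifies a second-type $V_i$ as $U_1\oplus\overline{U_1}$, and realifies by hand via an explicit complex-to-real similarity and a real change of basis $B_i$, obtaining the rotation-block form~\eqref{secondtype} along the way; you instead take the SPD of $V^{\mathbb{C}}$ for the commuting family $\{D(x)\}$, set $J(x)|_{W_j}=c_j(x)\,\mathrm{id}$ with $c_j=\log|\lambda_j|+\im\,\tilde\phi_j$, and realify abstractly by imposing $c_k=\overline{c_j}$ on conjugate components (which is legitimate since the $D(x)$ are real, so the SPD respects conjugation, and self-conjugate components carry positive real eigenvalues). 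Both proofs ultimately rest on the Hamel-basis lift of an additive map into $\mathbb{R}/2\pi\mathbb{Z}$, i.e.\ Lemma~\ref{lift}; in your version you should note that a solution of Cauchy's equation on $\mathbb{R}_{\ge 0}$ extends to $\mathbb{R}$ via $f(-x):=-f(x)$ before invoking it, but that is routine. The one thing the paper's more concrete construction delivers that yours does not is the explicit representation~\eqref{secondtype} of $g_i(x)$, which is reused verbatim in the proof of Theorem~\ref{col2}; as a drop-in replacement you would therefore have to re-derive that identity separately.
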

\begin{proof}
Let $V=\bigoplus_{i=1}^n V_i$ be the SRPD and let $g_i(x):=g(x)|_{V_i}$. Assume first that $V_i$ is of first type and $\lambda_i(x)$ is the single positive eigenvalue of $g_i(x)$. Then the multiplicative Jordan--Chevalley decomposition on $V_i$ is $g_i(x)=\lambda_i(x)\operatorname{id}T_i(x)$. Define $J_i(x):=\log(\lambda_i(x))\operatorname{id}$. Since $T_i(x)-\operatorname{id}$ is nilpotent, we have
\[
N_i(x):=\log(T_i(x))=\sum_{k=1}^{d-1}\frac{1}{k}(\operatorname{id}-T_i(x))^k.
\]
Notice that since the logarithm converges for all unipotent matrices the exponential map between the Lie algebra of nilpotent matrices and the Lie group of unipotent matrices is bijective
(see p.~35 in~\cite{GoWa98}).
Since $T_i(x)$ is a semigroup we have
\[
\exp(N_i(x))\exp(N_i(y))=\exp(N_i(y))\exp(N_i(x))=\exp(N_i(x+y)).
\]
Rewrite this as
\[
\exp(N_i(y))^{-1}\exp(N_i(x))\exp(N_i(y))=\exp\big(\exp(N_i(y))^{-1}N_i(x)\exp(N_i(y))\big)=\exp(N_i(x)).
\]
Since $\exp(N_i(y))^{-1}N_i(x)\exp(N_i(y))$ is nilpotent and $\exp$ is bijective we have
\[
\exp(N_i(y))^{-1}N_i(x)\exp(N_i(y))=N_i(x).
\]
For any $t \in \mathbb{R}$ set $N_i^t(x):=t\operatorname{id}+N_i(x)$. $N_i^t(x)$ is invertible for $t \not = 0$. Clearly
\[
N_i^t(x)\exp(N_i(y))=\exp(N_i(y))N_i^t(x).
\]
Applying the same idea again yields
\[
\exp(N_i^t(x)^{-1}N_i(y)N_i^t(x))=\exp(N_i(y)).
\]
Again by uniqueness we obtain
\[
N_i^t(x)N_i(y)=N_i(y)N_i^t(x)
\]
or equivalently
\[
N_i(x)N_i(y)=N_i(y)N_i(x).
\]
By commutativity we obtain
\[
\exp(N_i(x)+N_i(y))=\exp(N_i(x))\exp(N_i(y))=\exp(N_i(x+y))
\]
and hence, again by uniqueness, we have
$N_i(x)+N_i(y)=N_i(x+y)$. It follows that $J_i(x)$ and $N_i(x)$ satisfy the required conditions.

In the second case $V_i$ is of second type. By Theorem~\ref{col} and since $g_i(x)$ is real, $V_i$ decomposes over $\mathbb{C}$ as $V_i=U_1 \oplus U_2$ where $g_i(x)$ has only one eigenvalue on each $U_j$ and $U_1$ is isomorphic to $U_2$ with isomorphism given by $u \in U_1 \mapsto \overline{u} \in U_2$. By taking the principal branch of the logarithm, in the same manner as in the real case we obtain commuting $J, N$ on $U_1$ and $\overline{J}, \overline{N}$ on $U_2$ satisfying Cauchy's equation such that
\[
g_i(x)=\exp(J(x)+N(x)) \oplus \exp(\overline{J}(x)+\overline{N}(x)).
\]
Notice that $J(x)=(\mu(x)+\im\nu(x))\operatorname{id}$, where
$\im=\sqrt{-1}$ and $\nu(x)$ satisfies Cauchy's functional equation on $\mathbb{R}/[-\pi,\pi)$. By Lemma~\ref{lift},
 we can lift any solution on $\mathbb{R}/[-\pi,\pi)$ to a solution on $\mathbb{R}$ such that linearity is preserved. From now on denote by $\nu(x)$ this lift.
Hence if we choose any basis of $U_1$ and its complex conjugate on $U_2$ we obtain that $g_i(x)$ is similar to
\[
g_i(x)\sim \exp\left[
    \begin{pmatrix}
    J(x)+N(x) & 0\\
    0 & \overline{J}(x)+\overline{N}(x)
    \end{pmatrix}
\right].
\]
Taking the similarity transform with the matrix
\[
A:= \frac{1}{\sqrt{2}}
\begin{pmatrix}
\operatorname{id} & \im\cdot \operatorname{id}\\
\im \cdot \operatorname{id} & \operatorname{id}
\end{pmatrix},
\]
where $\operatorname{id}=\operatorname{id}_{\operatorname{dim}(U_1)}$,
we obtain
\begin{multline*}
A\begin{pmatrix}
    J(x)+N(x) & 0\\
    0 & \overline{J}(x)+\overline{N}(x)
    \end{pmatrix}
A^{-1}
\\
=\begin{pmatrix}
\operatorname{Re}(J(x)+N(x)) & \operatorname{Im}(J(x)+N(x))\\
-\operatorname{Im}(J(x)+N(x)) & \operatorname{Re}(J(x)+N(x))
\end{pmatrix} \in L(V_i).
\end{multline*}
Since matrix similarity over $\mathbb{C}$ is equivalent to matrix similarity over $\mathbb{R}$ for two real matrices, there exists a real matrix $B_i$ on $V_i$ such that
\begin{align}
\label{char}
g_i(x)=\exp\left[
B_i\begin{pmatrix}
\operatorname{Re}(J(x)) & \operatorname{Im}(J(x))\\
-\operatorname{Im}(J(x)) & \operatorname{Re}(J(x))
\end{pmatrix}B_i^{-1}+
B_i\begin{pmatrix}
\operatorname{Re}(N(x)) & \operatorname{Im}(N(x))\\
-\operatorname{Im}(N(x)) & \operatorname{Re}(N(x))
\end{pmatrix}B_i^{-1}
\right].
\end{align}
Setting
\begin{align*}
J_i(x):=B_i\begin{pmatrix}
\operatorname{Re}(J(x)) & \operatorname{Im}(J(x))\\
-\operatorname{Im}(J(x)) & \operatorname{Re}(J(x))
\end{pmatrix}B_i^{-1}
=B_i\begin{pmatrix}
\operatorname{id}\mu(x) & \operatorname{id}\nu(x)\\
-\operatorname{id}\nu(x) & \operatorname{id}\mu(x)
\end{pmatrix}B_i^{-1}
\end{align*}
and
\[
N_i(x):=B_i\begin{pmatrix}
\operatorname{Re}(N(x)) & \operatorname{Im}(N(x))\\
-\operatorname{Im}(N(x)) & \operatorname{Re}(N(x))
\end{pmatrix}B_i^{-1},
\]
we see that $J_i$ and $N_i$ satisfy the desired conditions, with $D_i(x)=\exp(J_i(x))$ and $T_i(x)=\exp(N_i(x))$ by uniqueness of the Jordan--Chevalley decomposition. The direct sums $\bigoplus_{i=1}^n J_i(x)$ and $\bigoplus_{i=1}^n N_i(x)$ give the required matrices. Furthermore in the case where there exists $x>0$ for which $g_i(x)$ has a complex eigenvalue $\lambda(x)=\mu(x)+\im\nu(x)$ we have
\[
g_i(x)=B_i\underbrace{\begin{pmatrix}
\cos(\nu(x))\operatorname{id} & \sin(\nu(x))\operatorname{id}\\
-\sin(\nu(x))\operatorname{id} & \cos(\nu(x))\operatorname{id}
\end{pmatrix}}_{U_i(x)}B_i^{-1}\exp(\mu(x)\operatorname{id}+B_iN(x)B_i^{-1}),
\]
where $U_i(x) \in SO(\operatorname{dim}(V_i))$.
Recall the matrix defined in~\eqref{eq:def rot}.
By changing the order of the basis, we have that $U_i$ is similar to the block diagonal matrix
(recall the notation~\eqref{eq:def rot})
\begin{align}
\label{rep1}
U_i(x) \sim
\begin{pmatrix}Q(\nu(x)) & & \\ & \ddots & \\ & & Q(\nu(x))\end{pmatrix}
=Q_{\operatorname{dim}(V_i)}^{\nu}(x).
\end{align}
Hence
\begin{equation}
\label{secondtype}
g_i(x)=\tilde{B}_iQ_{\operatorname{dim}(V_i)}^{\nu}(x)\tilde{B}_i^{-1}\exp(\mu(x)\operatorname{id}+B_iN(x)B_i^{-1}),
\end{equation}
where $\tilde{B}_i$ is the composition of $B_i$ with some permutation matrix $P$.
\end{proof}

\section{Proof of the main result}\label{se:proof}

After providing some final preparatory results, this section
ends with the proof of Theorem~\ref{semigroupdecomposition}.
Consider again the SRPD from Theorem~\ref{decomp}. Now on each $V_i$, $g(x)$ has either one positive eigenvalue $\lambda_i(x)=e^{\mu_i(x)}$ or two complex conjugate eigenvalues $\lambda_i(x)=e^{\mu_i(x)+\im\nu_i(x)}$ and $\overline{\lambda_i}(x)=e^{\mu_i(x)-\im\nu_i(x)}$, where it is possible
that $\nu_i(x)=\pi$ for some values of~$x$. If $V_i$ is of first type, set $\nu_i(x)=0$. Each $\nu_i$ is a solution to Cauchy's functional equation. Consider then the set $\{f | f=\nu_i \text{ or }f=-\nu_i \text{ for some } 1\le i \le n \}$ and partition it into equivalent solutions,
according to Definition~\ref{def:c eq}. This then gives a partition of the index set $\{1,\dots,n \}=\bigcup_{l=1}^k I_l$ in the following manner: If $\nu_i\sim \nu_j$ or $\nu_i\sim -\nu_j$, then $i,j$ are in the same subset of the partition. This is well-defined, since if $f\sim g$ then $-f\sim -g$.
Set $W_l:=\bigoplus_{i\in I_l}V_i$. 
\begin{definition}\label{def:psrpd}
  We call the decomposition $V=\bigoplus_{i=1}^k W_i$  the partitioned SRPD of~$V$.
\end{definition}
Furthermore associate with each $W_j$ one solution $\eta_j$ such that $\eta_j \sim \nu_i$ with $i \in I_j$. If $\eta_j$ is linear we always take $\eta_j \equiv 0$. Notice that for $i \neq j$ $\eta_i \not \sim \eta_j$, hence there can be at most one $W_i$ with $\eta_i=0$ and furthermore this is the only $W_i$ which can have odd dimension since it contains all $V_j$ of type~1 (recall Definition~\ref{def:srpd}).
\begin{theorem}
\label{col2}
Let $(g(x))_{x\ge 0}$ be a semigroup of invertible linear maps acting on $V$ and denote by $V=\bigoplus_{i=1}^k W_i$ its partitioned SRPD. Denote by $\eta_i$ the solution associated with~$W_i$.
 If $\eta_i$ is non-continuous then there exists a change of basis $A_i$ on $W_i$ such that
\[
g(x)|_{W_i}=A_iQ_{d_i}^{\eta_i}(x)A_i^{-1}\exp(G_i(x))
\]
where $d_i=\operatorname{dim}(W_i)$ and
\[
A_iQ_{d_i}^{\eta_i}(x)A_i^{-1}G_i(y)=G_i(y)A_iQ_{d_i}^{\nu_i}(x)A_i^{-1}
\]
for all $x,y\ge 0$.
If $\eta_i=0$ then
\[
g(x)|_{W_i}=\exp(G_i(x)).
\]
In both cases $G_i(x):W_i \to W_i$ is a commuting family of matrices on $W_i$ satisfying Cauchy's functional equation. Furthermore if $v$ is a common eigenvector of $(G_i(x))_{x\ge 0}$ such that $G_i(x)v=[\mu(x)+\im\nu(x)]v$, then $\nu$ is linear in $x$.
\end{theorem}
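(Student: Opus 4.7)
The plan is to work one $W_i$ at a time, refining the partitioned SRPD into the finer SRPD $W_i=\bigoplus_{j\in I_i}V_j$ and applying Theorem~\ref{exponential} blockwise. A first-type $V_j$ forces $\nu_j\equiv0$, so such blocks occur only in the unique $W_i$ for which $\eta_i\equiv0$; there, Theorem~\ref{exponential} directly gives $g(x)|_{V_j}=\exp(J_j(x)+N_j(x))$ and the block contributes its exponent to $G_i$ with no rotation factor at all. For $V_j$ of second type, formula~\eqref{secondtype} from the proof of Theorem~\ref{exponential} reads
\[
  g(x)|_{V_j}=\tilde B_j\,Q_{\dim V_j}^{\nu_j}(x)\,\tilde B_j^{-1}\exp(H_j(x)),\qquad H_j(x):=\mu_j(x)\operatorname{id}+B_jN_j(x)B_j^{-1},
\]
with the two factors commuting by uniqueness of the Jordan--Chevalley decomposition.

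The main step is to convert every $\nu_j$ into the common $\eta_i$. By construction of the partitioned SRPD one has $\nu_j\sim\varepsilon_j\eta_i$ for some $\varepsilon_j\in\{\pm1\}$, hence $\nu_j(x)=\varepsilon_j\eta_i(x)+c_jx$ for some $c_j\in\mathbb R$. Applying the addition rule $Q(\alpha+\beta)=Q(\alpha)Q(\beta)$ in each $2\times2$ block yields
\[
  Q_{\dim V_j}^{\nu_j}(x)=Q_{\dim V_j}^{\varepsilon_j\eta_i}(x)\cdot\exp(c_jxL^{(j)}),
\]
where $L^{(j)}$ is block-diagonal with copies of $L=\bigl(\begin{smallmatrix}0 & 1 \\ -1 & 0\end{smallmatrix}\bigr)$. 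When $\varepsilon_j=-1$, conjugation within $V_j$ by the reflection $\operatorname{diag}(1,-1,1,-1,\ldots)$ carries each $Q(-\eta_i)$-block to $Q(\eta_i)$, so absorbing this reflection into $\tilde B_j$ reduces us to $\varepsilon_j=+1$. The continuous factor $\exp(c_jxL^{(j)})$ has the same $2\times2$ block structure as $Q_{\dim V_j}^{\eta_i}$, so the two commute; and because the full rotation commutes with $\exp(H_j(x))$ (Theorem~\ref{exponential}), so does its continuous sub-factor. I then push that continuous rotation inside the exponential, producing $\tilde H_j(x):=H_j(x)+c_jx\,\tilde B_jL^{(j)}\tilde B_j^{-1}$, which is still a Cauchy solution and still commutes with the pure rotation.

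Assembling, I let $A_i$ be the direct sum of the adjusted $\tilde B_j$ composed with a basis permutation that lines up the individual $2\times2$ rotation blocks into a single $Q_{d_i}^{\eta_i}$, and I set $G_i(x):=\bigoplus_{j\in I_i}\tilde H_j(x)$ (with first-type summands contributing $J_j(x)+N_j(x)$ in the $\eta_i\equiv0$ case). This gives the announced form $g(x)|_{W_i}=A_iQ_{d_i}^{\eta_i}(x)A_i^{-1}\exp(G_i(x))$. Cauchy's equation for $G_i$ is inherited blockwise, and both commutativity statements---mutual commutativity of the family $(G_i(x))_x$ and commutativity with the conjugated rotation---reduce blockwise to commutativity already established in Theorem~\ref{exponential}. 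For the final eigenvector claim, each $\tilde H_j(x)$ has spectrum $\{\mu_j(x)\pm\im c_jx\}$ because the nilpotent parts contribute nothing to eigenvalues, so every common eigenvector of $(G_i(x))_x$ lies in the sum of blocks sharing a common $c_j$, and its eigenvalue therefore has linear imaginary part.

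The main obstacle I anticipate is bookkeeping: handling the sign choice $\varepsilon_j=-1$ cleanly, arranging a basis permutation that interleaves the $2\times2$ rotation blocks of distinct $V_j$'s into a single $Q_{d_i}^{\eta_i}$, and verifying that the absorption and reordering steps preserve both the Cauchy property and all the required commutativities. The algebraic content beyond Theorem~\ref{exponential} is just the additivity $Q(\alpha+\beta)=Q(\alpha)Q(\beta)$.
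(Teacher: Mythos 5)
Your proposal follows essentially the same route as the paper: refine $W_i$ into the SRPD blocks $V_j$, write each second-type block via~\eqref{secondtype} as a rotation $\tilde B_j Q^{\nu_j}_{\dim V_j}(x)\tilde B_j^{-1}$ times an exponential, factor the rotation as $Q^{\eta_i}(x)$ times the continuous correction $\exp(c_j x L^{(j)})$, absorb that correction into the exponential, assemble $A_i$ and $G_i(x)$ as direct sums (plus a permutation to interleave the $2\times2$ blocks), and handle the $\eta_i\equiv0$ case directly from Theorem~\ref{exponential}. The observed spectrum $\mu_j(x)\pm\im c_j x$ gives the linearity of the imaginary part on common eigenvectors in both accounts.

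The one genuine point of divergence is your explicit treatment of the sign $\varepsilon_j\in\{\pm1\}$. The partitioned SRPD groups $V_j$ with $W_i$ whenever $\nu_j\sim\eta_i$ \emph{or} $\nu_j\sim-\eta_i$, and the paper's proof simply writes $\eta_i\sim\nu_j$, tacitly using the freedom to re-label the conjugate pair of eigenvalues on $V_j$ (equivalently, to flip the sign of $\nu_j$) so that the $+$ sign always holds. Your alternative — conjugating by the reflection $\operatorname{diag}(1,-1,\ldots)$ inside $V_j$ to carry $Q(-\eta_i)$ to $Q(\eta_i)$, and absorbing the reflection into $\tilde B_j$ — achieves the same reduction without that re-labelling convention and makes the step explicit. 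That is a small but real improvement in rigor; the rest of the argument, including the commutativity of the rotation and exponential factors and the Cauchy property of $G_i$, is verified blockwise exactly as in the paper.
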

\begin{proof}
Assume first that $\eta_i$, the associated solution of Cauchy's functional equation, is non-continuous. For each $W_i$ we have the decomposition $W_i=\bigoplus_{j \in I_i} V_j$ where $V_j$ are subspaces from the
SRPD. Since $\eta_i$ is non-continuous, each $V_j$ in the direct sum has to be of second type, hence by \eqref{secondtype} on each $V_j$ we have
\[
g_j(x)=\tilde{B}_jQ_{\operatorname{dim}(V_j)}^{\nu_j}(x)\tilde{B}_j^{-1}\exp\big(\mu(x)\operatorname{id}+B_jN(x)B_j^{-1}\big).
\]
By definition of $\eta_i$ we have $\eta_i \sim \nu_j$ and hence there exists $c_j \in \mathbb{R}$ such that $\nu_j(x)=\eta_i(x)+c_j x$. Let
\[
C_j:=c_j\begin{pmatrix}L & & \\ & \ddots & \\ & & L\end{pmatrix}
\]
with
\[
L:=\begin{pmatrix}
0 & 1\\
-1 & 0
\end{pmatrix},
\]
where $C_j$ is of dimension $\operatorname{dim}(V_j)\times \operatorname{dim}(V_j)$. Then we have
\begin{align*}
g_j(x)&=\tilde{B}_jQ_{\operatorname{dim}(V_j)}^{\nu_j}(x)\tilde{B}_j^{-1}\exp\big(\mu_j(x)\operatorname{id}+B_jN_j(x)B_j^{-1}\big)\\
&=\tilde{B}_jQ_{\operatorname{dim}(V_j)}^{\eta_i}(x)\exp(C_jx)\tilde{B}_j^{-1}\exp\big(\mu_j(x)\operatorname{id}+B_jN_j(x)B_j^{-1}\big)\\
&=\tilde{B}_jQ_{\operatorname{dim}(V_j)}^{\eta_i}(x)\tilde{B}_j^{-1}\exp\big(\tilde{B}_jC_jx\tilde{B}_j^{-1}+\mu_j(x)\operatorname{id}+B_jN_j(x)B_j^{-1}\big).
\end{align*}
Set $\tilde{G}_j(x):=\tilde{B}_jC_jx\tilde{B}_j^{-1}+\mu_j(x)\operatorname{id}+B_jN_j(x)B_j^{-1}$. Then
\begin{equation}\label{eq:Gi}
A_i:=\bigoplus_{j \in I_i} \tilde{B}_j \quad \text{and} \quad G_i(x):=\bigoplus_{j \in I_i} \tilde{G}_j(x)
\end{equation}
satisfy
\[
g(x)|_{W_i}=\bigoplus_{j\in I_i} g_j(x)=A_iQ_{d_i}^{\eta_i}(x)A_i^{-1}\exp(G_i(x)).
\]
By construction the common eigenvalues of $\exp(G_i(x))$ satisfy $\lambda(x)=e^{\mu_j(x)\pm\im c_jx}$. 
If $\eta_i \equiv 0$, then by Theorem~\ref{exponential} there exists $G_i(x)$ such that $g(x)|_{W_i}=\exp(G_i(x))$. By the definition of $W_i$, it follows that the imaginary parts of the eigenvalues of $G_i(x)$ have to be equivalent to $\eta_i\equiv 0$, hence they have to be linear.
\end{proof}
\begin{theorem}
\label{thm2}
Let $g$ be a non-degenerate semigroup acting on~$V$ which satisfies 
Assumption~\ref{ass:bd}. Then there exists a matrix $M$ and a semigroup $S(x) \in SO(d)$, where $SO(d)$ is the set of special orthogonal matrices, such that
\[
g(x)=S(x)\exp(Mx), \quad x\geq0,
\]
with $M$ commuting with $S(x)$.
\end{theorem}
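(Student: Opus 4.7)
The plan is to apply Theorem~\ref{col2} on the blocks of the partitioned SRPD $V=\bigoplus_i W_i$ and then push Assumption~\ref{ass:bd} twice: first to linearize the exponential factor on each block, then to force the rotation factor into $SO(d)$.

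On each $W_i$, Theorem~\ref{col2} gives
\[
g(x)|_{W_i}=A_iQ_{d_i}^{\eta_i}(x)A_i^{-1}\exp(G_i(x))
\]
(the rotation factor is trivial when $\eta_i\equiv 0$), with $G_i$ Cauchy-additive and eigenvalues $\mu_j(x)\pm\im c_j x$. Since $e^{\mu_j(x)}\le\|g(x)\|_{\mathrm{op}}\le f(x)$, the Cauchy-additive $\mu_j$ is locally bounded above and hence linear; this makes $\|D(x)^{-1}\|$, and thus $\|T(x)\|=\|D(x)^{-1}g(x)\|$ and $\|N_j(x)\|=\|\log T_j(x)\|$, locally bounded, so the nilpotent parts linearize as well. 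Altogether $G_i(x)=M_i^0 x$.

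Next, $f(0)=1$ and right-continuity of $f$ at $0$ combine with the fact that a non-measurable Cauchy solution $\eta_i$ has image dense in $\mathbb{R}$ on every interval $(0,\epsilon)$. For any $\theta\in\mathbb{R}$, choose $x_n\downarrow 0$ with $\eta_i(x_n)\to\theta$; then $g(x_n)|_{W_i}\to A_iU_\theta A_i^{-1}$, where $U_\theta$ is the block-diagonal rotation of angle $\theta$, while $\|g(x_n)\|_{\mathrm{op}}\le f(x_n)\to 1$ passes to the limit to give $\|A_iU_\theta A_i^{-1}w\|\le\|w\|$ for every $w\in W_i$. Applying the same argument at $-\theta$ handles the inverse, so $A_iU_\theta A_i^{-1}$ is an isometry of $(W_i,\langle\cdot,\cdot\rangle|_{W_i})$ for every $\theta$.

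The main obstacle is proving $W_i\perp W_j$ for $i\ne j$, so that the blockwise isometries combine into a global isometry of $V$. For $u\in W_i$, $v\in W_j$ and $S_k^0(x):=A_kQ_{d_k}^{\eta_k}(x)A_k^{-1}$, expanding $\|g(x)(\alpha u+v)\|^2\le f(x)^2\|\alpha u+v\|^2$ for $\alpha\in\{+1,-1\}$ and taking limits along $x_n\downarrow 0$ with $(S_i^0(x_n),S_j^0(x_n))\to(s_i,s_j)$ yields $\langle s_i u, s_j v\rangle=\langle u,v\rangle$ for every $(s_i,s_j)$ in the closure $H$ of the joint orbit. The $\mathbb{Q}$-linearity $\eta_k(x/n)=\eta_k(x)/n$ shows $H$ accumulates at the identity, so $H$ is a closed subgroup of positive dimension in the two-torus generated by $(S_i^0,S_j^0)$. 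Decomposing $\langle\cdot,\cdot\rangle|_{W_i^{\mathbb C}\times W_j^{\mathbb C}}$ into the weight components on the $\pm\im$ eigenspaces of the infinitesimal rotations, invariance under $H$ forces every nonzero weight $\pm a\pm b$, with $(a,b)$ a tangent direction of $H$, to vanish; this leaves either $\langle\cdot,\cdot\rangle|_{W_i\times W_j}=0$ or $H$'s tangent direction being $(1,\pm 1)$, i.e.\ $\eta_j\sim\pm\eta_i$, which is excluded by the definition of the partitioned SRPD. Setting $S(x):=\bigoplus_i S_i^0(x)$ and $M:=\bigoplus_i M_i^0$ then gives $S(x)\in SO(d)$ (each block is an isometry of determinant one acting on an orthogonal subspace), the commutation $S(x)M=MS(x)$ holds block by block by Theorem~\ref{col2}, and $g(x)=S(x)\exp(Mx)$ is immediate.
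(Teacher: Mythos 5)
Your proof is correct and follows the paper's strategy for the first three steps: applying Theorem~\ref{col2} on the partitioned SRPD, linearizing $\mu_j$ and the nilpotent parts via local boundedness from Assumption~\ref{ass:bd}, and using density of the graph of $\eta_i$ together with right-continuity of $f$ at $0$ (first at $\theta$, then at $-\theta$) to make each $S_i^0$ an isometry on $W_i$. These parts are essentially the same as the paper's.

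Where you genuinely diverge is the orthogonality $W_i\perp W_j$. The paper invokes its auxiliary Lemma~\ref{sequence} to produce a single sequence $\hat x_n\to 0$ with $\eta_i(\hat x_n)\to\pi$ and $\eta_j(\hat x_n)\to\theta\in(-\pi,\pi)$ simultaneously, and then uses the explicit identity $(\operatorname{id}+Q_{d_j}(\theta))^{-1}=(2\cos(\theta/2))^{-1}Q_{d_j}(-\theta/2)$ to extract $\langle v,u\rangle=0$ from one carefully chosen limit. You instead record the invariance $\langle s_iu,s_jv\rangle=\langle u,v\rangle$ for \emph{every} limit $(s_i,s_j)$ in the closure $H$ of the joint rotation orbit, observe $H$ is a closed positive-dimensional subgroup of a $2$-torus, and kill the pairing via a weight decomposition on the $\pm\im$ eigenspaces of the infinitesimal rotations, with the would-be-exceptional tangent directions $(1,\pm1)$ ruled out because they would force $\eta_j\sim\pm\eta_i$, contradicting the partitioned SRPD. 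This Lie-theoretic route is more conceptual and explains \emph{why} the non-equivalence of the $\eta$'s is what matters; the paper's route is more elementary and entirely self-contained, needing only the ad hoc Lemma~\ref{sequence} instead of facts about closed subgroups of tori and weights. Two details that you elide and should be spelled out: (i) the orbit lies in $H$, and since $\mathbb{R}_{\ge0}$ is divisible while $H/H^0$ is finite, the orbit actually lies in $H^0$, so that $H^0=\{(e^{\im t},e^{\pm\im t})\}$ really does give $\eta_j\mp\eta_i$ with values in $2\pi\mathbb{Z}$, which is a Cauchy solution into a discrete group and hence vanishes; and (ii) the degenerate case $\eta_j\equiv0$, where the second circle factor collapses — your argument still works because the tangent direction is then $(1,0)\ne(1,\pm1)$, but it deserves a sentence. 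With those filled in, the argument is sound and an attractive alternative to the paper's.
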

\begin{proof}
Consider the SRPD $V=\bigoplus_{i=1}^n V_i$ from Theorem~\ref{decomp}, and define $g_i(x):= g(x)|_{V_i}$. Clearly,  $\|g_i(x)\|_{\operatorname{op}}\le \|g(x)\|_{\operatorname{op}}\le f(x)$. For each eigenvalue $\lambda_i(x)=e^{\mu_i(x)+\im \nu_i(x)}$ on $V_i$ we have
\[
|\lambda_i(x)| \le \|g_i(x)\|_{\operatorname{op}}\le f(x).
\]
Since $\lambda_i(x)\lambda_i(y)=\lambda_i(x+y)$, we have $|\lambda_i(x)| |\lambda_i(y)|=|\lambda_i(x+y)|$. Moreover, $\mu_i(x)=\log(|\lambda_i(x)|)$, and so $\mu_i(x) \le \log(f(x))$ for $x\ge 0$ and $\mu_i(x)+ \mu_i(y)=\mu_i(x+y)$. It follows that $\mu_i$ is locally bounded and hence that $\mu_i(x)=a_ix$ for some $a_i \ge 0$. As in the proof of Theorem~\ref{exponential}, on $V_i$ we have $g_i(x)=\exp(J_i(x))T_i(x)$ where $J_i(x)$ is diagonalizable with eigenvalues $a_ix\pm\im\nu_i(x)$, and thus
\[
\|T_i(x)\|_{\operatorname{op}}\le f(x)e^{-a_ix}.
\]
As in Theorem~\ref{exponential}, set
\[
N_i(x):=\log(T_i(x))=\sum_{k=1}^{d-1}\frac{1}{k}(\operatorname{id}-T_i(x))^k.
\]
Then, since $\|\operatorname{id}-T_i(x)\|_{\operatorname{op}}\le 1+f(x)e^{-a_ix}$, we obtain
\begin{align}
\|N_i(x)\|_{\operatorname{op}}\le \sum_{k=1}^{d-1}\frac{1}{k}\|\operatorname{id}-T_i(x)\|_{\operatorname{op}}^k \le \sum_{k=1}^{d-1}\frac{1}{k}\big(1+f(x)e^{-a_ix}\big)^k=:F(x),
\end{align}
where $F(x)$ is again locally bounded. Since the operator norm in finite dimensions is equivalent to the $L^\infty$-norm, it follows that each entry of $N_i(x)$ is locally bounded and satisfies Cauchy's functional equation. Thus, there exists a nilpotent linear map $P_i$ such that $N_i(x)=P_ix$.
Let $V=\bigoplus_{i=1}^k W_i$ be the partitioned SRPD of $V$. Assume first that the solution
associated with~$W_i$ is $\eta_i\equiv 0$. Since $W_i = \bigoplus_{j \in I_i}V_j$ we have $g(x)|_{W_i}=\bigoplus_{j \in I_i}\exp(J_j(x)+P_jx)$. The real part of the eigenvalues of $J_j(x)$ is linear in $x$ and by Theorem~\ref{col2} also the complex parts have to be linear since $\nu_i \equiv 0$. Since $J_j(x)$ is diagonalizable and all its eigenvalues are continuous in $x$, $J_j(x)$ is continuous in $x$ and hence $J_j(x)=\tilde{M}_jx$. Setting $M_i:=\bigoplus_{j \in I_i} (\tilde{M}_j+P_j)$ and $S_i(x)= \operatorname{id}$ yields $g(x)|_{W_i}=S_i(x)\exp(M_ix)$. By Theorem~\ref{col2} and~\eqref{eq:Gi},
we have for $W_i$ with $\eta_i$ non-continuous
\[
g(x)|_{W_i}=A_iQ_{d_i}^{\eta_i}(x)A_i^{-1}\exp(G_i(x))
\]
with
\begin{align*}
G_i(x)&=\bigoplus_{j \in I_i}\big(\tilde{B}_jC_jx\tilde{B}_j^{-1}+\mu_j(x)\operatorname{id}+B_jN_j(x)B_j^{-1}\big) \\
&=\bigoplus_{j \in I_i}\big(\tilde{B}_jC_j\tilde{B}_j^{-1}+a_j\operatorname{id}+B_jP_jB_j^{-1}\big)x.
\end{align*}
Hence, setting $M_i:=\bigoplus_{j \in I_i}(\tilde{B}_jC_j\tilde{B}_j^{-1}+a_j\operatorname{id}+B_jP_jB_j^{-1})$, we have $G_i(x)=M_ix$. Thus
\[
g(x)|_{W_i}=A_iQ_{d_i}^{\eta_i}(x)A_i^{-1}\exp(M_ix).
\]
Next we show that $S_i(x):=A_iQ_{d_i}^{\eta_i}(x)A_i^{-1}$ is an isometry on $W_i$. 
 Since $\|g(x)\|_{\operatorname{op}}\le f(x)$, we have
\begin{align}
\label{positivity}
\langle v,v \rangle-\frac{1}{f(x)^2}\langle g(x)v,g(x)v\rangle \ge 0
\end{align}
for any $v \in V$. Hence, for $v \in W_i$ we have
\[
\langle v,v\rangle-\frac{1}{f(x)^2}\big\langle \exp(M_ix)S_i(x)v,\exp(M_ix)S_i(x)v\big\rangle \ge 0.
\]
Fix an arbitrary  $x_0 \ge 0$, and set $\theta_0:=\eta_i(x_0)$.
The graph of $\eta_i$ is dense in $\mathbb{R}\times \mathbb{R}$ (see Appendix~\ref{se:cauchy}), and so there is 
a sequence of positive reals $x_n$ with $\lim_{n \to \infty} x_n = 0$ such that
$\lim \eta_i(x_n)=\theta_0$.
 Then, for $v \in W_i$ we obtain
\begin{align*}
0 &\le \lim \Big( \langle v,v\rangle-\frac{1}{f(x_n)^2}\big\langle \exp(M_ix_n)S_i(x_n)v,\exp(M_ix_n)S_i(x_n)v\big\rangle\Big)\\
 &= \langle v,v\rangle-\langle S_i(x_0)v,S_i(x_0)v\rangle,
\end{align*}
where $\lim \frac{1}{f(x_n)}=1$ since $f$ is right-continuous at~$0$.
Choose $(\tilde{x}_n)$ such that $\lim \tilde{x}_n =0$ and $\lim \eta_i(\tilde{x}_n)= -\theta_0$. Then, since $S(x_0)v \in W_i$, we obtain
\begin{align*}
0 &\le \lim\Big(\langle S(x_0)v,S(x_0)v \rangle - \frac{1}{f(\tilde{x}_n)^2}\big\langle \exp(M_i\tilde{x}_n)S_i(\tilde{x}_n)S_i(x_0)v,\exp(M_i\tilde{x}_n)S_i(\tilde{x}_n)S_i(x_0)v\big\rangle\Big)\\ &= \langle S_i(x_0)v,S_i(x_0)v \rangle - \langle v,v\rangle,
\end{align*}
since $Q_{d_i}(-\theta_0)Q_{d_i}(\theta_0)=\operatorname{id}$. Hence we obtain
\[
\langle v,v\rangle =\langle S(x_0)v,S(x_0)v\rangle,
\]
which implies that $S(x_0)$ is an isometry on $W_i$. Next we show that all $W_i$ are pairwise orthogonal. Let $v \in W_i$ and $u \in W_j$ for $i \neq j$. By Lemma~\ref{sequence}, there exists w.l.o.g.\ a sequence $\hat{x}_n$
 such that $\lim \hat{x}_n =0$, $\lim \eta_i(\hat{x}_n)=\pi$ and $\lim \eta_j(\hat{x}_n) = \theta \in (-\pi,\pi)$. Hence $\lim S(\hat{x}_n)|_{W_i}=-\operatorname{id}$ and $\lim S(\hat{x}_n)|_{W_j}=A_jQ_{d_j}(\theta)A_j^{-1}$ with $\theta \in (-\pi,\pi)$. We have the identity $Q_{d_j}(\phi)+Q_{d_j}(-\phi)=2\cos(\phi)\operatorname{id}$. Applying this to $\phi:=\theta/2$ we obtain
\[
\operatorname{id}+Q_{d_j}(\theta)=2\cos(\theta/2)Q_{d_j}(\theta/2).
\]
Hence we have $(\operatorname{id}+Q_{d_j}(\theta))^{-1}=(2\cos(\theta/2))^{-1}Q_{d_j}(-\theta/2)$. Set $\tilde{u}:=A_j^{-1}(\operatorname{id}+Q_{d_j}(\theta))^{-1}A_ju$. Then $\tilde{u}\in W_j$, and applying \eqref{positivity} to $v+\tilde{u}$ and taking the limit along $\hat{x}_n$ yields
\begin{align*}
    0 &\le \langle v+\tilde{u},v+\tilde{u}\rangle-\langle \lim S(\hat{x}_n)(v+\tilde{u}),\lim S(\hat{x}_n)(v+\tilde{u})\rangle\\
    &=\|v\|^2+\|\tilde{u}\|^2-\|\lim S(\hat{x}_n)v\|^2-\|\lim S(\hat{x}_n)\tilde{u}\|^2+2\langle v,\tilde{u}\rangle-2\langle \lim S(\hat{x}_n)v,\lim S(\hat{x}_n)\tilde{u}\rangle\\ 
    &= 2\langle v,\tilde{u}\rangle-2\langle \lim S(\hat{x}_n)v,\lim S(\hat{x}_n)\tilde{u}\rangle,
\end{align*}
where the last equality follows from the fact that each $S(\hat{x}_n)$ is an isometry on $W_i$ and $W_j$. Since the inequality above also holds for $v-\tilde{u}$, we obtain the equality
\begin{align*}
0=&\langle v, \tilde{u} \rangle - \langle\lim S(\hat{x}_n)v,\lim S(\hat{x}_n)\tilde{u}\rangle\\ =& \langle v, \tilde{u} \rangle + \langle v,A_jQ_{d_j}(\theta)A_j^{-1}\tilde{u}\rangle = \langle v, A_j(\operatorname{id}+Q_{d_j}(\theta))A_j^{-1}\tilde{u}\rangle = \langle v,u \rangle.
\end{align*}
Since $v,u$ were arbitrary, this shows orthogonality.
Hence the decomposition $V=\bigoplus_{i=1}^k W_i$ is orthogonal, and since $S_i(x)$ is an isometry on $W_i$, it follows that $S(x):=\bigoplus_{i=1}^k S_i(x)$ is in $SO(d)$. Setting $M=\bigoplus_{i=1}^k M_i$, we obtain
\[
g(x)=S(x)\exp(Mx).\qedhere
\]
\end{proof}
\begin{corollary}
\label{colelementary}
Assume that $S(x) =A Q_d^\nu(x) A^{-1}$ is a semigroup with $A$ being an invertible matrix such that $S(x)$ is an isometry for each $x \ge 0$. Then there exists an orthogonal matrix $U$ such that
\[
S(x)=U Q_{d}^{\nu}(x)U^\tr.
\]
\end{corollary}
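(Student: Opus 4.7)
My plan is to reduce the problem to the standard trick of combining isometry with polar decomposition. The hypothesis that $S(x)$ is an isometry means $S(x)^\tr = S(x)^{-1}$, and since $Q_d^\nu(x)^\tr = Q_d^\nu(-x) = Q_d^\nu(x)^{-1}$ (each $2\times 2$ rotation block is orthogonal), the two expressions
\[
  S(x)^\tr = A^{-\tr} Q_d^\nu(x)^{-1} A^\tr, \qquad S(x)^{-1} = A Q_d^\nu(x)^{-1} A^{-1}
\]
must agree. Rearranging this identity gives that $A^\tr A$ commutes with $Q_d^\nu(x)^{-1}$, hence with $Q_d^\nu(x)$, for every $x \ge 0$.

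Next I would apply the polar decomposition $A = UP$, where $U$ is orthogonal and $P = (A^\tr A)^{1/2}$ is the unique positive definite square root of $A^\tr A$. Since $P^2 = A^\tr A$ commutes with $Q_d^\nu(x)$, the square root $P$ does as well: by the spectral theorem, $P$ is a polynomial in $P^2$ (or, equivalently, $P$ and $Q_d^\nu(x)$ can be simultaneously reduced on each eigenspace of $P^2$, which is $Q_d^\nu(x)$-invariant).

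With this in hand, substitute $A = UP$ and $A^{-1} = P^{-1}U^\tr$ into the defining formula for $S(x)$:
\[
  S(x) = A Q_d^\nu(x) A^{-1} = U P Q_d^\nu(x) P^{-1} U^\tr = U Q_d^\nu(x) U^\tr,
\]
where the last equality uses the commutativity just established. This is the desired representation.

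The only non-routine step is the passage from $P^2$ commuting with $Q_d^\nu(x)$ to $P$ itself commuting with $Q_d^\nu(x)$; everything else is a direct rearrangement. That step is classical, though, since $P$ is positive definite and its square root is a continuous (indeed polynomial on the spectrum) function of $P^2$, so any matrix commuting with $P^2$ also commutes with $P$.
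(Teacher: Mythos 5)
Your proof is correct, and it takes a genuinely different and cleaner route than the paper. The paper proceeds by hand: it derives the trigonometric identities $S(x)+S(x)^\tr=2\cos(\nu(x))\operatorname{id}$ and $\sin(\nu(y))S(x)-\sin(\nu(x))S(y)=\sin(\nu(x)-\nu(y))\operatorname{id}$, picks an $x$ with $\sin(\nu(x))\neq 0$, and from an arbitrary unit vector $v$ explicitly constructs a companion unit vector $u\perp v$ so that $H=\operatorname{Span}(u,v)$ is a $2$-dimensional $S$-invariant subspace; iterating on $H^\perp$ produces the orthonormal basis, and $U$ is the resulting change-of-basis matrix. Your argument instead isolates the structural content in one algebraic step: the isometry condition $S(x)^\tr=S(x)^{-1}$ forces $A^\tr A$ to commute with $Q_d^\nu(x)$, and then the polar decomposition $A=UP$ with $P=(A^\tr A)^{1/2}$ does the rest, since $P=p(A^\tr A)$ for a suitable interpolating polynomial and hence also commutes with $Q_d^\nu(x)$, so $P Q_d^\nu(x) P^{-1}=Q_d^\nu(x)$. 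Your approach avoids the case distinction needed to choose an $x$ with $\sin(\nu(x))\neq 0$, does not need the recursive basis construction, and makes the role of the hypothesis transparent (isometry $\Leftrightarrow$ $A^\tr A$ commutes with all $Q_d^\nu(x)$); the paper's version is more concrete and shows explicitly how the invariant $2$-planes are built, which may be useful if one wants to track the basis. One cosmetic point: the semigroup is only defined for $x\geq 0$, so it is cleaner to say $Q_d^\nu(x)^\tr=Q_d^\nu(x)^{-1}$ directly (each block is a rotation) rather than routing through $Q_d^\nu(-x)$; this does not affect the argument.
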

\begin{proof}
One can easily verify the identities $S(x)+S(x)^\tr=2\cos(\nu(x))\operatorname{id}$
and $\sin(\nu(y))S(x)-\sin(\nu(x))S(y)=\sin(\nu(x)-\nu(y))\operatorname{id}$. 
Choose $x>0$ such that $\sin(\nu(x))\neq 0$. Such an $x$ clearly exists since $\nu$ is linear on $\mathbb{Q}x$. Choose any $v \in V$ of unit length and set
\[
u := \frac{S(x)v-\cos(\nu(x))v}{\sin(\nu(x))}.
\]
The definition of $u$ is invariant under the choice of~$x$ as long as $\sin(\nu(x))\neq 0$. This can be seen by noticing that
\begin{align*}
\sin(\nu(y))S(x)-\sin(\nu(x))S(y)&=\sin(\nu(x)-\nu(y))\operatorname{id} \\ \Longleftrightarrow \quad \frac{S(x)v-\cos(\nu(x))v}{\sin(\nu(x))}&=\frac{S(y)v-\cos(\nu(y))v}{\sin(\nu(y))}.
\end{align*}
Hence we obtain $S(x)v=\cos(\nu(x))v+\sin(\nu(x))u$. We have
\[
\langle u,v \rangle = \frac{1}{\sin(\nu(x))}\langle S(x)v-\cos(\nu(x))v,v \rangle=\frac{1}{\sin(\nu(x))}\big(\langle S(x)v,v\rangle-\cos(\nu(x))\big)=0,
\]
where the last equality follows from
 \[
 2\langle S(x)v,v\rangle = \langle S(x)v,v\rangle+\langle S(x)^\tr v,v\rangle=\langle S(x)v+S(x)^\tr v,v\rangle=2\cos(\nu(x)).
 \]
 Similarly we can show that $u$ is also of unit length. 
Set $H:=\operatorname{Span}(u,v)$.  Clearly, $H$ is invariant under $(S(x))_x$ and hence so is $H^\perp$ since each $S(x)\in SO(d)$. In this manner we can construct an orthonormal basis,  and we denote by $U$ the matrix associated with this change of basis. Then we have
\[
Ug(x)U^\tr =UAS(x)A^{-1}U^\tr\exp(UMU^\tr x)=Q_d^\nu(x)\exp(UMU^\tr x). \qedhere
\]
\end{proof}
\begin{lemma}
\label{limit}
Suppose that the semigroup~$g$, acting on~$V$, satisfies Assumption~\ref{ass:bd}.
Let $V=V_1 \oplus V_2$ be the decomposition of Corollary~\ref{zero} such that $g|_{V_1}$ is invertible. Then, for any $v \in V_1$ we have
\[
\lim_{x \to 0}\frac{\|g(x)v\|}{\|v\|}=1.
\]
\end{lemma}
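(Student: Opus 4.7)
The plan is to reduce the statement to a direct continuity argument by applying Theorem~\ref{thm2} to the restriction of~$g$ to the invariant subspace~$V_1$, where $g$ acts invertibly. First I would observe that $V_1$ inherits the inner product from~$V$, so that for any $v\in V_1$ one has $\|g(x)v\|/\|v\|\le \|g(x)\|_{\mathrm{op}}\le f(x)$; consequently the restriction $g|_{V_1}$ is a non-degenerate semigroup still satisfying Assumption~\ref{ass:bd} with the same majorant~$f$.

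Invoking Theorem~\ref{thm2} on~$V_1$, I would obtain a decomposition
\[
  g(x)|_{V_1}=S(x)\exp(Mx),\qquad x\ge 0,
\]
where $S(x)\in SO(V_1)$ is an isometry of $V_1$ (in particular $\|S(x)w\|=\|w\|$ for every $w\in V_1$) and $M$ is a fixed linear map commuting with $S(x)$. Then for any nonzero $v\in V_1$,
\[
  \|g(x)v\| \;=\; \|S(x)\exp(Mx)v\| \;=\; \|\exp(Mx)v\|.
\]

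Finally, since the matrix exponential is continuous and $\exp(0)=\operatorname{id}$, letting $x\to 0^+$ gives $\exp(Mx)\to \operatorname{id}$ in operator norm, hence $\|\exp(Mx)v\|\to\|v\|$. Dividing by $\|v\|$ yields the claimed limit. The only (minor) obstacle is checking that Assumption~\ref{ass:bd} descends to $g|_{V_1}$, which is immediate from the definition of the operator norm once one notes that $V_1$ is equipped with the restricted inner product; once this is in place Theorem~\ref{thm2} does all the work, and the continuity of $\exp$ concludes the proof.
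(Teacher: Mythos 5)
Your proof is essentially identical to the paper's: both apply Theorem~\ref{thm2} to $g|_{V_1}$, use that $S(x)\in SO(V_1)$ is an isometry so $\|g(x)v\|=\|\exp(Mx)v\|$, and then invoke continuity of the matrix exponential at~$0$. Your explicit check that Assumption~\ref{ass:bd} descends to the restriction $g|_{V_1}$ is a small bit of extra care that the paper leaves implicit, but it does not change the argument.
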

\begin{proof}
By Theorem~\ref{thm2}, we have $g(x)|_{V_1}=S(x)\exp(Mx)$.
 Since $S(x) \in SO(V_1)$, we obtain
\[
\langle g(x)v,g(x)v \rangle = \langle \exp(Mx)v,\exp(Mx)\rangle, \quad v\in V_1,
\]
which is continuous in $x$. Hence
\[
\lim_{x\to 0}\|g(x)v\|=\lim_{x\to 0}\|\exp(Mx)v\|=\|v\|. \qedhere
\]
\end{proof}
\begin{corollary}
\label{orthogonal}
Suppose that the semigroup~$g$, acting on~$V$, satisfies Assumption~\ref{ass:bd}. Then the decomposition from Corollary~\ref{zero} is orthogonal.
\end{corollary}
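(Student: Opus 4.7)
The plan is to exploit the asymmetry between $V_1$ and $V_2$ under $g(x)$ for small $x > 0$: vectors in $V_2$ are annihilated while vectors in $V_1$ keep their norm in the limit. Combined with the boundedness assumption, this forces orthogonality.

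Fix $v_1 \in V_1$ and $v_2 \in V_2$; we may assume $v_2 \neq 0$ (otherwise there is nothing to show). For any scalar $\alpha \in \mathbb{R}$ and any $x > 0$, since $g(x) v_2 = 0$ by Corollary~\ref{zero},
\[
g(x)(v_1 + \alpha v_2) = g(x) v_1.
\]
Applying Assumption~\ref{ass:bd} yields
\[
\|g(x) v_1\| = \|g(x)(v_1 + \alpha v_2)\| \le f(x)\, \|v_1 + \alpha v_2\|.
\]

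Next I pass to the limit $x \to 0^+$. On the left, Lemma~\ref{limit} gives $\lim_{x\to 0^+}\|g(x) v_1\| = \|v_1\|$; on the right, $f$ is right-continuous at $0$ with $f(0) = 1$, so the bound becomes
\[
\|v_1\| \le \|v_1 + \alpha v_2\|, \qquad \alpha \in \mathbb{R}.
\]
Thus $v_1$ realizes the minimum of the quadratic $\alpha \mapsto \|v_1 + \alpha v_2\|^2 = \|v_1\|^2 + 2\alpha\langle v_1, v_2\rangle + \alpha^2 \|v_2\|^2$ at $\alpha = 0$. Differentiating in $\alpha$ at $0$ (or completing the square) forces $\langle v_1, v_2 \rangle = 0$. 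Since $v_1 \in V_1$ and $v_2 \in V_2$ were arbitrary, $V_1 \perp V_2$, which is the claimed orthogonality of the decomposition.

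No serious obstacle is anticipated; the argument is short once Lemma~\ref{limit} is available. The only point to be careful about is ensuring that the limit on the left side is taken along $x \to 0^+$ (where $g(x) v_2 = 0$ holds), which matches the hypothesis on $f$ being right-continuous at~$0$.
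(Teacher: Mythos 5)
Your proof is correct and follows essentially the same route as the paper: both rely on Lemma~\ref{limit}, the fact that $g(x)$ annihilates $V_2$ for $x>0$, and Assumption~\ref{ass:bd} to obtain a norm inequality in the limit $x\to 0^+$ that forces orthogonality. The paper frames it as a contradiction by comparing $v\in V_1$ with $v-p_{V_2}(v)$, whereas you minimize the quadratic $\alpha\mapsto\|v_1+\alpha v_2\|^2$; these are equivalent packagings of the same idea.
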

\begin{proof}
Let $V=V_1 \oplus V_2$ with $g(x)|_{V_1}$ being non-degenerate and $g(x)|_{V_2}\equiv 0$. Assume $V_1$ is not orthogonal to $V_2$. Then there exists $v \in V_1$ such that $p_{V_2}(v)\neq 0$, where $p_{V_2}$ denotes the orthogonal projection onto~$V_2$. By Lemma~\ref{limit}, we have
\[
\lim_{x \to 0}\frac{\|g(x)v\|}{\|v\|}=1. 
\]
Calculating the same limit for $v-p_{V_2}(v)$ we obtain
\begin{align*}
    \lim_{x \to 0}\frac{\|g(x)(v-p_{V_2}(v))\|}{\|v-p_{V_2}(v)\|}=\lim_{x \to 0}\frac{\|g(x)v\|}{\|v\|}\frac{\|v\|}{\|v-p_{V_2}(v)\|}=\frac{\|v\|}{\|v-p_{V_2}(v)\|}.
\end{align*}
Since $\|v\|^2=\|v-p_{V_2}(v)\|^2+\|p_{V_2}(v)\|^2$ and $\|p_{V_2}(v)\|\neq 0$, we have $\frac{\|v\|}{\|v-p_{V_2}(v)\|}>1$. Hence
\[
\lim_{x \to 0}\frac{\|g(x)(v-p_{V_2}(v))\|}{\|v-p_{V_2}(v)\|}>1,
\]
but this contradicts $\|g(x)\|_{\operatorname{op}}\le f(x)$. Hence $V_1 \perp V_2$.
\end{proof}
\begin{proof}[Proof of Theorem~\ref{semigroupdecomposition}]
By Corollary~\ref{orthogonal}, we have the orthogonal decomposition $V=\tilde{V}_1 \oplus \tilde{V}_2$ 
with $g(x)|_{\tilde{V}_2}\equiv 0$ and $g(x)|_{\tilde{V}_1}$  non-degenerate. Applying Theorem~\ref{thm2} to $g(x)|_{\tilde{V}_1}$ yields the result.
\end{proof}

\appendix

\section{Cauchy's functional equation}\label{se:cauchy}

It is classical that all continuous solutions of the equation~\eqref{eq:cauchy},
 $f(x)+f(y)=f(x+y)$, are linear, and
that the non-linear solutions are not continuous, even not Lebesgue measurable, and have
dense graphs. For this,
and further references, we refer to~\cite[Section~1.1]{BiGoTe87}.
In this section, we provide two auxiliary results on Cauchy's equation.
They concern lifting solutions from an interval to the real line, resp.\ the joint behavior of two solutions
that differ by a non-linear function.
\begin{lemma}\label{lift}
Let $f:\mathbb{R} \to \mathbb{R}/[-a,a)$ be a solution to Cauchy's functional 
equation~\eqref{eq:cauchy} on $\mathbb{R}/[-a,a)$ with $a>0$. Then there exists a solution $\tilde{f}:\mathbb{R}\to \mathbb{R}$ of Cauchy's functional equation such that $f(x)\equiv \tilde{f}(x) \mod [-a,a)$. The solution~$\tilde{f}$ is linear
if and only if~$f$ is linear.
\end{lemma}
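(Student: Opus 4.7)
My plan is a Hamel-basis construction. First I would pick a Hamel basis $B$ of $\mathbb{R}$ over $\mathbb{Q}$ and, for each $b \in B$, take $\tilde{f}(b) \in [-a, a)$ to be the canonical representative of $f(b) \in \mathbb{R}/[-a, a)$. Then I would extend $\tilde{f}$ to all of $\mathbb{R}$ by $\mathbb{Q}$-linearity: for $x = \sum_i q_i b_i$ with finitely supported $q_i \in \mathbb{Q}$, set $\tilde{f}(x) := \sum_i q_i \tilde{f}(b_i)$. This makes $\tilde{f}$ additive (in fact $\mathbb{Q}$-linear), so it satisfies Cauchy's equation on $\mathbb{R}$.

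Next I would verify the congruence $\tilde{f}(x) \equiv f(x) \pmod{2a}$ for all $x \in \mathbb{R}$. Given $x = \sum_i q_i b_i$, clearing denominators by some common $N \in \mathbb{N}$ yields $Nx = \sum_i m_i b_i$ with $m_i \in \mathbb{Z}$; then additivity of $\tilde{f}$ and of $f$ modulo $2a$ gives $N \tilde{f}(x) = \sum_i m_i \tilde{f}(b_i) \equiv \sum_i m_i f(b_i) = f(Nx) = N f(x) \pmod{2a}$. Promoting this to $\tilde{f}(x) \equiv f(x) \pmod{2a}$ requires the basis values $\tilde{f}(b)$ to be chosen so as to be compatible with $f$ at every rational multiple of $b$, i.e.\ so that $q \tilde{f}(b) \equiv f(qb) \pmod{2a}$ for all $q \in \mathbb{Q}$; this amounts to first lifting each restriction $f|_{\mathbb{Q}\,b}$ to a $\mathbb{Q}$-linear real-valued map before assembling across the basis.

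The linearity equivalence is immediate from the construction: if $f(x) \equiv cx \pmod{2a}$ is linear for some $c \in \mathbb{R}$, then $\tilde{f}(x) := cx$ is a linear Cauchy lift; conversely, if a lift $\tilde{f}(x) = cx$ is linear then its reduction $f(x) = cx \bmod 2a$ is linear too. The main obstacle I foresee is the descent from $N(\tilde{f} - f) \in 2a\mathbb{Z}$ to $(\tilde{f} - f) \in 2a\mathbb{Z}$: since $\mathbb{R}/2a\mathbb{Z}$ has $\mathbb{Z}$-torsion and fails to be a $\mathbb{Q}$-vector space, the naive $\mathbb{Q}$-linear extension of arbitrary basis representatives a priori only matches $f$ modulo $2a/N$ on terms with denominator $N$. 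The remedy is to pick a coherent system of real lifts of $f$ along each rational line $\mathbb{Q}\,b$, so that $\tilde{f}(b)/n \equiv f(b/n) \pmod{2a}$ holds for every $n \in \mathbb{N}$, and only then extend by $\mathbb{Q}$-linearity across the Hamel basis.
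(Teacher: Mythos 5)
You take the same route as the paper: fix a Hamel basis, choose representatives for $f$ on the basis, and extend $\mathbb{Q}$-linearly. You have correctly spotted the step that the paper's own proof passes over with a ``clearly'': picking arbitrary representatives $\tilde{f}(b)$ of $f(b)$ on the basis and extending $\mathbb{Q}$-linearly only yields $N\tilde{f}(x)\equiv Nf(x)\pmod{2a}$ after clearing denominators, and since $\mathbb{R}/2a\mathbb{Z}$ has $N$-torsion this does \emph{not} give $\tilde{f}(x)\equiv f(x)\pmod{2a}$. That is a genuine obstacle, not a bookkeeping detail.

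Your proposed remedy, however, does not close the gap. You ask for a coherent system of real lifts along each rational line $\mathbb{Q}b$, i.e.\ a $\mathbb{Q}$-linear $\tilde{f}_b\colon\mathbb{Q}b\to\mathbb{R}$ projecting to $f|_{\mathbb{Q}b}$, and then propose to assemble these across the basis. But such a lift along a rational line need not exist. Any additive $\tilde{h}\colon\mathbb{Q}\to\mathbb{R}$ is automatically $\mathbb{Q}$-linear, hence of the form $\tilde{h}(q)=cq$ for a single $c\in\mathbb{R}$; so an additive $h\colon\mathbb{Q}\to\mathbb{R}/2a\mathbb{Z}$ admits a real additive lift only if $h(q)=[cq]$ for some constant $c$. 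Not every additive $h$ has this form. For instance, restrict attention to those $h$ with $h(1)=0$: additivity forces $h(1/n)$ into the $n$-torsion of $\mathbb{R}/2a\mathbb{Z}$, and prescribing such an $h$ amounts to choosing a compatible system of residues modulo $n$ for every $n$, i.e.\ an element of $\widehat{\mathbb{Z}}=\varprojlim_n\mathbb{Z}/n\mathbb{Z}$. Of these, exactly the ordinary integers $\mathbb{Z}\subsetneq\widehat{\mathbb{Z}}$ come from a constant $c$ (with $c\in2a\mathbb{Z}$); any $\widehat{\mathbb{Z}}\setminus\mathbb{Z}$ choice yields an $h$ with no real additive lift. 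Choosing a different Hamel basis does not help: a global lift of $f$ would restrict to a lift on each line $\mathbb{Q}b$, so if one restriction fails to lift, no assembly can succeed. In short, the step you flagged really does fail, and the repair you sketch presupposes exactly the fact that is in doubt. The linearity equivalence at the end is, as you say, the easy part once a lift is in hand; the whole difficulty is in producing the lift, and both your argument and the paper's short proof leave that unresolved.
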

\begin{proof}
Take a Hamel basis $(r_i)_{i \in I}$ of $\mathbb{R}$ such that $r_i \in [-a,a)$ for every $i \in I$. This is clearly possible by rescaling every basis element if necessary. For any $x \in \mathbb{R}$ there exists a finite subset $I_x \subset I$ and $c_i \in \mathbb{Q}$ such that $x=\sum_{i \in I_x}c_ir_i$. Set $\tilde{f}(x)=\sum_{i \in I_x}c_if(r_i)$, then clearly
 $f(x)\equiv \tilde{f}(x) \mod [-a,a)$.
If $f$ is linear then clearly $\tilde{f}$ is linear as well. If $f$ is not linear then there exist two basis elements $r_1$ and $r_2$ such that $f(r_1)-\frac{r_1}{r_2}f(r_2)\neq 0$,
 and hence $\tilde{f}$ is also not linear.
\end{proof}
\begin{definition}\label{def:c eq}
We say that two solutions $\nu$ and $\eta$ of Cauchy's functional equation are equivalent if $\nu-\eta$ is linear.
\end{definition}
\begin{lemma}
\label{sequence}
Let $f,g:\mathbb{R}\to \mathbb{R}$ be two non-equivalent solutions of Cauchy's functional equation. Then there exists a sequence $(x_n)_{n\ge 0}$ in $\mathbb{R}_{\ge 0}$, converging to~$0$, such that either $\lim_n f(x_n)=\pi$ and $\lim_n g(x_n)=\theta$ with $|\theta|<\pi$ or vice versa.
\end{lemma}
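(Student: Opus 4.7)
The plan is to study the set $A \subseteq \mathbb{R}^2$ of accumulation points of $\phi(x) := (f(x), g(x))$ along sequences $x_n \to 0^+$. First, I would verify that $A$ is a closed convex cone containing the origin: it is closed by definition, contains $0$ because $\phi(1/n) = \phi(1)/n \to 0$, is closed under addition via $\phi(x_n + y_n) = \phi(x_n) + \phi(y_n)$ with $x_n + y_n \to 0^+$, and is closed under scaling by $\mathbb{Q}_{>0}$ via $\phi(q x_n) = q \phi(x_n)$, which upgrades to $\mathbb{R}_{\geq 0}$-scaling by closedness and density. The goal is then to exhibit in $A$ a point of the form $(\pi, \theta)$ or $(\theta, \pi)$ with $|\theta| < \pi$.

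The argument splits on whether $\{f, g, \mathrm{id}\}$ are $\mathbb{R}$-linearly independent as $\mathbb{Q}$-linear maps $\mathbb{R} \to \mathbb{R}$. In the \emph{independent} case, I would show the closure $G \subseteq \mathbb{R}^3$ of the graph $\{(x, f(x), g(x)) : x \in \mathbb{R}\}$ is all of $\mathbb{R}^3$. By the structure theorem for closed subgroups, $G = V + \Lambda$ with $V$ a linear subspace and $\Lambda$ a lattice in a complement; if $\dim V < 3$, composing the graph with the quotient $\mathbb{R}^3 \to \mathbb{R}^3/V$ produces a $\mathbb{Q}$-linear map $\mathbb{R} \to \mathbb{R}^{3-\dim V}$ whose image lands in the discrete subgroup $\pi(\Lambda)$. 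Any such map must vanish (its image would be a $\mathbb{Q}$-divisible subgroup of a finitely generated torsion-free abelian group, hence trivial), so the graph lies in $V$, yielding an $\mathbb{R}$-linear relation among $\{\mathrm{id}, f, g\}$ and contradicting independence. Hence $G = \mathbb{R}^3$, and in particular there exists $x_n \to 0^+$ with $\phi(x_n) \to (\pi, 0)$, giving the conclusion. In the \emph{dependent} case, a nontrivial relation $\alpha f + \beta g + \gamma\,\mathrm{id} = 0$ rearranges (after swapping $f$ and $g$ if necessary) to $f = \alpha' g + c'\,\mathrm{id}$ for some $\alpha', c' \in \mathbb{R}$; the non-linearity of $f - g = (\alpha' - 1) g + c'\,\mathrm{id}$ forces $g$ non-linear and $\alpha' \neq 1$, while the parallel non-linearity of $f + g = (\alpha' + 1) g + c'\,\mathrm{id}$ (the stronger condition $f \not\sim -g$ supplied by the application in the proof of Theorem~\ref{thm2}) rules out $\alpha' = -1$. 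Invoking the density of the graph of the non-linear Cauchy solution $g$ near $0$, I would select $x_n \to 0^+$ with $g(x_n) \to \pi/\alpha'$ when $|\alpha'| > 1$ (so $\phi(x_n) \to (\pi, \pi/\alpha')$) and $g(x_n) \to \pi$ otherwise (so $\phi(x_n) \to (\alpha' \pi, \pi)$); the free coordinate has absolute value strictly below $\pi$ in both subcases.

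The hard part is the density of $G$ in the independent case. The structure theorem gives the skeleton, but the crucial leverage is the rigidity fact that any $\mathbb{Q}$-linear function from $\mathbb{R}$ to a finitely generated torsion-free group vanishes; this converts ``the graph is confined to a proper closed subgroup'' into an $\mathbb{R}$-linear relation among $f, g, \mathrm{id}$. A small technicality worth flagging is that the stated hypothesis $f \not\sim g$ is not by itself sufficient, as the degenerate case $f + g \equiv 0$ admits $\phi = (h, -h)$ for which $A$ is the antidiagonal and the claim fails; the lemma is applied only where the stronger $f \not\sim \pm g$ holds, under which the above dichotomy completes the proof.
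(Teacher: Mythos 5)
Your argument is correct in its essentials but takes a genuinely different route from the paper's. The paper is constructive: it picks $x,y>0$ at which $f$ and $g$ ``slope differently'' (in the sense $\tfrac{f(x)}{x}-\tfrac{f(y)}{y}\neq\tfrac{g(x)}{x}-\tfrac{g(y)}{y}$, possible since $f-g$ is nonlinear), solves for the real coefficients that send $(x,f(x))$ and $(y,f(y))$ to $(0,\pi)$, and approximates them by rationals $q_n,r_n$ so that $x_n=q_nx-r_ny\to0^+$, $f(x_n)\to\pi$, and $|g(x_n)|$ stays below $\pi$ because the $g$-slope difference was chosen smaller in absolute value. You replace this rational-approximation computation with heavier but more conceptual machinery: the structure theorem $G=V\oplus\Lambda$ for closed subgroups of $\mathbb R^n$, the rigidity fact that a divisible subgroup of a finitely generated free abelian group is trivial, and convex-cone reasoning. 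What you gain is a clean dichotomy (is $\{\mathrm{id},f,g\}$ $\mathbb R$-linearly dependent?) that makes transparent exactly what can go wrong; what the paper's route gains is elementariness and a single uniform computation.

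One step in your independent case needs one more sentence. That $G=\mathbb R^3$ does not by itself produce a sequence $x_n\to0^+$ with $\phi(x_n)\to(\pi,0)$: the approximating $x_n$ could all lie on the negative side, giving $f(x_n)\to-\pi$ instead. The fix uses the cone $A$ you already introduced (or equivalently its three-dimensional avatar): the closure $C$ of $\{(x,f(x),g(x)):x>0\}$ is a closed convex cone contained in $\{x\ge0\}$, $-C\subseteq\{x\le0\}$, and $C\cup(-C)=G=\mathbb R^3$ forces $\{x>0\}\subseteq C$ and hence $C=\{x\ge0\}$; the fiber over $x=0$ is then all of $\{0\}\times\mathbb R^2$, so $A=\mathbb R^2$ and $(\pi,0)\in A$.

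Your flag about $f\sim-g$ is sharp and correct, and it applies to the paper's own proof as well. If $f+g$ is linear, then $\bigl|\tfrac{f(x)}{x}-\tfrac{f(y)}{y}\bigr|=\bigl|\tfrac{g(x)}{x}-\tfrac{g(y)}{y}\bigr|$ for all $x,y$, so the paper's ``w.l.o.g.'' strict inequality is unachievable, and indeed $(f(x),g(x))$ accumulates at $0^+$ only on the antidiagonal, making $|\theta|<\pi$ impossible. Thus Lemma~\ref{sequence} as stated implicitly requires $f\not\sim\pm g$. As you observe, this stronger hypothesis is exactly what holds where the lemma is invoked: the partitioned SRPD merges indices with $\nu_i\sim\pm\nu_j$, so the representatives $\eta_i$ used in the proof of Theorem~\ref{thm2} satisfy $\eta_i\not\sim\pm\eta_j$ for $i\neq j$. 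The lemma's statement (and the paper's ``w.l.o.g.'') should be tightened accordingly, but the main theorem is unaffected.
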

\begin{proof}
Choose $x,y \in \mathbb{R}_{>0}$ such that the two vectors $v_1=(x,f(x),g(x))$ and $v_2=(y,f(y),g(y))$ are linearly independent and $\frac{f(x)}{x}-\frac{f(y)}{y}\neq \frac{g(x)}{x}- \frac{g(y)}{y}$. This is possible, since
\[
\frac{f(x)}{x}-\frac{f(y)}{y}= \frac{g(x)}{x}- \frac{g(y)}{y}
\]
for all $x,y>0 $ would imply that $f-g$ is linear. Assume w.l.o.g.\ that $\left| \frac{f(x)}{x}-\frac{f(y)}{y} \right| > \left| \frac{g(x)}{x}-\frac{g(y)}{y} \right|$.
Since $f$ and $g$ are both linear on $\mathbb{Q}x$ and $\mathbb{Q}y$ and $v_1$ and $v_2$ are linearly independent, there exist sequences $q_n \in \mathbb{Q}$ and $r_n \in \mathbb{Q}$ such that $q_n x -r_n y\ge 0$ for every $n$, $\lim q_n x -r_n y = 0$ and $\lim_n f(q_nx-r_ny)=\lim_n q_n f(x)-r_n f(y)=\pi$.
We show that $x_n=q_n x -r_n y$ has the required property.
 Clearly $\lim \frac{r_n}{q_n}=\frac{x}{y}$ and
\[
\pi=\lim_n q_n f(x)-r_n f(y) = \lim_n q_n \left(f(x)-\frac{r_n}{q_n} f(y)\right)=\left(f(x)-\frac{x}{y} f(y)\right)\lim q_n.
\]
Hence
\begin{align*}
|\lim q_n g(x)-r_n g(y)|=&\left|\left(g(x)-\frac{x}{y} g(y)\right)\lim q_n\right|\\ =&\left|\pi\left(\frac{g(x)}{x}-\frac{g(y)}{y}\right)/\left(\frac{f(x)}{x}-
 \frac{f(y)}{y}\right)\right| < \pi. \qedhere
\end{align*}
\end{proof}

\bigskip

\bibliographystyle{siam}
\bibliography{../literature}

\end{document}